\pgfplotsset{compat=1.18}
\numberwithin{equation}{section}
\newtheorem{theorem}{Theorem}[section]
\newtheorem{corollary}{Corollary}[section]
\newtheorem{lemma}{Lemma}[section]
\newtheorem{proposition}{Proposition}[section]
\theoremstyle{remark}
\newtheorem{remark}{Remark}[section]
\newcommand{\D}{\mathbb{D}}
\begin{document}

\begin{frontmatter}

\ead{rahim.r.kargar@utu.fi, rkargar1983@gmail.com}

\title{Higher-order Volterra-type integral operator on Hardy and Bergman spaces}

\author{Rahim Kargar}
\affiliation{organization={Department of Mathematics and Statistics, University of Turku},
            city={Turku},
            country={Finland}}

\begin{abstract}

We investigate the higher-order Volterra-type integral operator $T_{g,n}$ in the unit disk $\mathbb D$,
defined for $n\in\mathbb N$ by
\begin{equation*}
T_{g,n}[f](z)
:= \underbrace{\int_{0}^{z}\int_{0}^{t_1}\cdots\int_{0}^{t_{n-1}}}_{n\ \text{times}}
f(t_n)g'(t_n)\,dt_n\cdots dt_1,\quad z\in\mathbb D,
\end{equation*}
where $f$ and $g$ are analytic in $\mathbb D$.
We establish sharp norm and essential norm estimates, and give complete characterizations of boundedness and compactness of $T_{g,n}$ on Hardy spaces $H^p$ and weighted Bergman spaces $A_\alpha^p$, in terms of (vanishing) Carleson measure conditions determined by $|g'|$.

\end{abstract}

\begin{keyword}
 Volterra-type integral operator \sep Compactness \sep Boundedness \sep Hardy Spaces \sep Bergman Spaces \sep Carleson measure.
 \MSC[2020] 47B38 \sep 47B01.
\end{keyword}
\end{frontmatter}

\section{Introduction}

We write $:=$ to indicate that the right-hand side defines the left-hand side.

\subsection{Function spaces} Let $\mathbb{D}:=\{z\in\mathbb{C}: |z|<1\}$ denote the unit disk and let
$\mathbb{T}:=\partial\mathbb{D}=\{z\in\mathbb{C}: |z|=1\}$ be the unit circle.

For $1\le p<\infty$, the Hardy space $H^p$ consists of analytic functions
$f$ in $\mathbb{D}$ with finite norm (see, e.g., \cite[p.~48]{Garnett-book})
\begin{equation}\label{Hardy-norm}
\|f\|_{H^p} := \sup_{0<r<1} M_p(r,f),
\end{equation}
where
\begin{equation}\label{M-p}
M_p(r,f):=\left( \frac{1}{2\pi}\int_0^{2\pi} |f(re^{i\theta})|^p\,d\theta\right)^{1/p}.
\end{equation}
For $f\in H^p$, the function $r\mapsto M_p(r,f)$ is nondecreasing. It follows from the
Minkowski inequality for $p\ge 1$ that $\|\cdot\|_{H^p}$ is a norm, and $H^p$ is complete
with respect to this norm; hence $H^p$ is a Banach space. If $p<q$, then
$H^q\subset H^p$.

\medskip
Given a positive Borel measure $\mu$ on $\mathbb D$ and $0<p<\infty$, we write
$L^p(\mathbb D,\mu)$ for the space of Borel measurable functions
$f:\mathbb D\to\mathbb C$ such that
\begin{equation*}
\|f\|_{L^p(\mathbb D,\mu)}^p:=\int_{\mathbb D}|f(z)|^p\,d\mu(z)<\infty.
\end{equation*}
When $f$ is analytic (in particular, $f\in H^p$), this integral is well-defined since analytic functions are measurable.

For an analytic function $f$ in $\mathbb{D}$ and a point $a\in\mathbb{D}$,
the hyperbolic translate of $f$ is defined by
\begin{equation*}
f_a(z):=f\!\left(\frac{z+a}{1+\overline a z}\right)-f(a).
\end{equation*}
The space $\mathrm{BMOA}$ consists of those functions $f\in H^2$ such that
\begin{equation*}
\|f\|_*:=|f(0)|+\sup_{a\in\mathbb{D}}\|f_a\|_{H^2}<\infty.
\end{equation*}
It is well-known that $\mathrm{BMOA}\subset H^p$ for every $1\le p<\infty$.
The subspace $\mathrm{VMOA}$ is defined as the closure of the polynomials in
$\mathrm{BMOA}$, or equivalently, as the collection of functions $f\in\mathrm{BMOA}$
satisfying $\|f_a\|_{H^2}\to0$ as $|a|\to1^{-}$.

Recall that a positive Borel measure $\mu$ on $\mathbb D$ is a Carleson measure
(for $H^p$, $1\le p<\infty$) if there exists a constant $C>0$ such that
\begin{equation*}
\mu(S(I))\le C\,|I|
\end{equation*}
for every arc $I\subset\mathbb T$, where $|I|:=m(I)/(2\pi)$ denotes the normalized arc length
($m$ is the Lebesgue measure on $\mathbb T$), and
\begin{equation*}
S(I):=\{\,re^{i\theta}:\ e^{i\theta}\in I,\ 1-|I|\le r<1\,\}
\end{equation*}
is the associated Carleson box \cite[Ch. II]{Garnett-book}. For more illustration, see Figure \ref{fig:carleson-box} below.
We say that $\mu$ is a vanishing Carleson measure if
\begin{equation*}
\frac{\mu(S(I))}{|I|}\longrightarrow 0 \quad \text{as } |I|\to 0,
\end{equation*}
uniformly with respect to the position of $I$ on $\mathbb T$ (equivalently, the supremum over all arcs $I$
with $|I|\le \delta$ tends to $0$ as $\delta\to0$); see \cite{Duren1970,Pommerenke1977}. On the other hand, it is well known that $\mu$ is a Carleson measure if and only if
\begin{equation}\label{Carleson-property}
\int_{\mathbb{D}} |f(z)|^p\,d\mu(z)\le C_p\,\|f\|_{H^p}^p,
\quad 0<p<\infty,
\end{equation}
for all $f\in H^p$; see \cite[p.~231]{Garnett-book} or \cite[Theorem 9.3]{Duren1970}.

In terms of Carleson measures, for an analytic function $f$ in $\mathbb{D}$,
the condition $f\in\mathrm{BMOA}$ is equivalent to requiring that the measure
\begin{equation*}
|f'(z)|^2(1-|z|^2)\,dA(z)
\end{equation*}
is a Carleson measure on $\mathbb{D}$, where $dA(z) = ({1}/{\pi})dxdy$ is the normalized area measure on $\mathbb{D}$.
Similarly, $f\in\mathrm{VMOA}$ if and only if this measure is a vanishing Carleson measure; see \cite{Aleman} and \cite[p.~262]{Garnett-book}.

Let $\mathcal{B}$ denote the Bloch space of analytic functions $f$ in $\mathbb{D}$ such that
\begin{equation*}
\|f\|_{\mathcal{B}}:=|f(0)|+\sup_{z\in\mathbb{D}} (1-|z|^2)|f'(z)|<\infty.
\end{equation*}
The closure of the polynomials in $\|\cdot\|_{\mathcal{B}}$ is called the little Bloch space,
which is denoted by $\mathcal{B}_0$. We have $\mathrm{BMOA}\subset \mathcal{B}$ and
$\mathrm{VMOA}\subset \mathcal{B}_0$.

For $\alpha>-1$ and $1\le p<\infty$, the weighted Bergman space $A_\alpha^p$ consists of
analytic functions $f$ in $\mathbb D$ such that (see, e.g., \cite[p.~2]{Hedenmalm})
\begin{equation}\label{eq:bergman-norm}
\|f\|_{A_\alpha^p}^p:=\int_{\mathbb D}|f(z)|^p\,dA_\alpha(z)<\infty,
\quad
dA_\alpha(z):=(\alpha+1)(1-|z|^2)^\alpha\,dA(z).
\end{equation}

A positive Borel measure $\mu$ on $\mathbb{D}$ is called a $(p,q)$-Carleson measure
for $A_\alpha^p$ if there exists a constant $C > 0$ such that
\begin{equation*}
\int_{\mathbb{D}} |f(z)|^q \, d\mu(z) \le C \|f\|_{A_\alpha^p}^q
\end{equation*}
for all $f \in A_\alpha^p$. We say that $\mu$ is a vanishing $(p,q)$-Carleson measure
if for every $\varepsilon > 0$, there exists $\delta > 0$ such that
\begin{equation*}
\int_{S(I)} |f(z)|^q \, d\mu(z) \le \varepsilon \|f\|_{A_\alpha^p}^q
\end{equation*}
for all arcs $I$ with $|I| < \delta$ and all $f \in A_\alpha^p$ with $\|f\|_{A_\alpha^p} \le 1$, see \cite{oscar, Dya}.

For the theory of Carleson measures on Bergman spaces, we refer to
\cite{Hastings1975, Luecking1986, Zhu2007}.

\subsection{The higher-order Volterra-type integral operator} The classical Volterra-type integral operator with symbol $g$, introduced by Pommerenke \cite{Pommerenke1977}, is defined by \begin{equation*} T_g[f](z):=\int_0^z f(t)g'(t)\,dt,\quad z\in\mathbb{D}, \end{equation*} where $f$ and $g$ are analytic in $\mathbb{D}$. Pommerenke proved that $T_g$ is bounded on $H^2$ if and only if $g\in\mathrm{BMOA}$. The fundamental work of Aleman and Siskakis later established that, for $1\le p<\infty$, the operator $T_g$ is bounded on $H^p$ if and only if $g\in\mathrm{BMOA}$, and compact on $H^p$ if and only if $g\in\mathrm{VMOA}$ \cite{Aleman}. These results have generated extensive research on Volterra-type operators on Hardy, Bergman, and related spaces; see, for instance, \cite{Aleman-Cima-2001, AlemanSiskakis1997, Niko2020, Niko2024, zhou-zhu} and the references therein.

In this paper, we study a higher-order Volterra-type operator obtained by iterating the integration procedure. For $n\in\mathbb{N}$ and analytic functions $f$ and $g$ in $\mathbb{D}$, we define \begin{equation}\label{eq:def-Tgn} T_{g,n}[f](z) :=\underbrace{\int_0^z\int_0^{t_1}\cdots\int_0^{t_{n-1}}}_{n\ \text{times}} f(t_n)g'(t_n)\,dt_n\cdots dt_1, \quad z\in\mathbb{D}. \end{equation} Here, each integral is taken along the line segment from $0$ to the upper limit; equivalently, since the integrand is analytic, the value is independent of the choice of rectifiable path in $\mathbb{D}$. This operator admits the equivalent convolution representation \begin{equation}\label{eq:conv-repr} T_{g,n}[f](z)=\frac{1}{(n-1)!}\int_0^z (z-t)^{\,n-1}f(t)g'(t)\,dt, \quad z\in\mathbb{D}, \end{equation} and clearly reduces to $T_g$ when $n=1$. This generalization was introduced in \cite{Kar-Volterra}.

A useful structural feature of $T_{g,n}$ is its factorization through the classical integration operator. Let \begin{equation*} V\varphi(z):=\int_0^z \varphi(t)\,dt \end{equation*} and denote by $V^k$ the $k$-fold integration operator \begin{equation}\label{V-k} V^k\varphi(z)=\underbrace{\int_0^z\int_0^{t_1}\cdots\int_0^{t_{k-1}}}_{k\ \text{times}} \varphi(t_k)\,dt_k\cdots dt_1. \end{equation} Using the same method as in \cite[Proposition 4.1]{Kar-Volterra}, one obtains the convolution formula \begin{equation}\label{eq:Vk-conv} V^k\varphi(z)=\frac{1}{(k-1)!}\int_0^z (z-t)^{k-1}\varphi(t)\,dt. \end{equation} If we use the iterated integral representation \eqref{eq:def-Tgn} (or by induction on $n$), then we verify the factorization \begin{equation}\label{eq:factorization} T_{g,n}=V^{\,n-1}\circ T_{g,1}. \end{equation} Another useful representation of $T_{g,n}$ involves reduced operators. For $0\le k\le n-1$, define \begin{equation}\label{Ak} A_{k,g}[f](z):=\int_0^z t^{k} f(t)g'(t)\,dt,\quad z\in\mathbb{D}. \end{equation} Using the binomial expansion of $(z-t)^{n-1}$ in the convolution representation \eqref{eq:conv-repr}, one can express $T_{g,n}$ as a finite linear combination \begin{equation}\label{eq:Tn-reduced} T_{g,n}[f](z)=\frac{1}{(n-1)!}\sum_{k=0}^{n-1} (-1)^k\binom{n-1}{k} z^{n-1-k}\,A_{k,g}[f](z). \end{equation} This decomposition is established in Lemma~\ref{lem:fin dec} below and provides an alternative approach to studying $T_{g,n}$ through the simpler operators $A_{k,g}$.

\section{Boundedness and Compactness}

As a first step, we show that the operator $T_{g,n}$ is bounded. For this, we need the following lemma on the
$k$-fold integration operator.

\begin{lemma}\label{lem:Vk-sharp}
Let $1\le p<\infty$ and $k\in\mathbb{N}$. Then the $k$-fold integration operator
$V^k:H^p\to H^p$, defined by \eqref{V-k}, is bounded and
\begin{equation*}
\|V^k\|_{H^p\to H^p}=\frac{1}{k!}.
\end{equation*}
\end{lemma}

\begin{proof}
Let $\varphi\in H^p$ and $0<r<1$. Using the convolution representation \eqref{eq:Vk-conv} and parametrizing the radial segment from $0$ to $z=re^{i\theta}$ by $t=se^{i\theta}$, $0\le s\le r$, we obtain
\begin{align*}
V^k\varphi(re^{i\theta})
&= \frac{1}{(k-1)!}\int_{0}^{re^{i\theta}}(re^{i\theta}-t)^{k-1}\varphi(t)\,dt \\
&= \frac{1}{(k-1)!}\int_{0}^{r}(re^{i\theta}-se^{i\theta})^{k-1}\varphi(se^{i\theta})\,e^{i\theta}\,ds \\
&= \frac{e^{ik\theta}}{(k-1)!}\int_{0}^{r}(r-s)^{k-1}\varphi(se^{i\theta})\,ds.
\end{align*}
Taking absolute values and applying Minkowski's integral inequality (valid since $p\ge1$), we obtain
\begin{align*}
M_p(r,V^k\varphi)
&= \left(\frac{1}{2\pi}\int_{0}^{2\pi}\bigl|V^k\varphi(re^{i\theta})\bigr|^p\,d\theta\right)^{1/p} \\
&\le \frac{1}{(k-1)!}\int_{0}^{r}(r-s)^{k-1}
\left(\frac{1}{2\pi}\int_{0}^{2\pi}\bigl|\varphi(se^{i\theta})\bigr|^p\,d\theta\right)^{1/p} ds \\
&= \frac{1}{(k-1)!}\int_{0}^{r}(r-s)^{k-1} M_p(s,\varphi)\,ds.
\end{align*}
Since $M_p(s,\varphi)$ is nondecreasing in $s$ and bounded by $\|\varphi\|_{H^p}$ for $0<s<1$, it follows that
\begin{align*}
M_p(r,V^k\varphi)
&\le \frac{\|\varphi\|_{H^p}}{(k-1)!}\int_{0}^{r}(r-s)^{k-1}\,ds
= \frac{r^k}{k!}\,\|\varphi\|_{H^p}
\le \frac{1}{k!}\,\|\varphi\|_{H^p}.
\end{align*}
Taking the supremum over $0<r<1$ yields
\begin{equation*}
\|V^k\varphi\|_{H^p}\le \frac{1}{k!}\,\|\varphi\|_{H^p},
\quad\text{hence}\quad
\|V^k\|_{H^p\to H^p}\le \frac{1}{k!}.
\end{equation*}

For the reverse inequality, consider $\varphi\equiv1$. Then $\|\varphi\|_{H^p}=1$ and
\begin{equation*}
V^k1(z)=\frac{1}{(k-1)!}\int_{0}^{z}(z-t)^{k-1}\,dt=\frac{z^k}{k!}.
\end{equation*}
Since $\|z^k\|_{H^p}=1$ for all $k\ge0$, we obtain
\begin{equation*}
\|V^k\|_{H^p\to H^p}\ge \|V^k1\|_{H^p}=\frac{1}{k!}.
\end{equation*}
Combining the estimates gives the desired equality.
\end{proof}

We now estimate the norm of the operator $T_{g,n}$.

\begin{lemma}\label{lem:norm-est}
Let $1\le p<\infty$ and $n\in\mathbb{N}$. Let $g$ be analytic in $\mathbb{D}$ such that $T_{g,1}:H^p\to H^p$ is bounded.
Then
\begin{equation}\label{eq:Tgn-norm}
\|T_{g,n}\|_{H^p\to H^p}
\le C_n\,\|T_{g,1}\|_{H^p\to H^p},
\quad\text{where}\quad
C_n:=\frac{1}{(n-1)!}.
\end{equation}
Moreover, the constant $C_n=1/(n-1)!$ is optimal among all bounds obtained via the factorization \eqref{eq:factorization} and submultiplicativity of operator norms.
\end{lemma}

\begin{proof}
By the factorization \eqref{eq:factorization}, namely $T_{g,n}=V^{\,n-1}\circ T_{g,1}$,
and the submultiplicativity of operator norms, we obtain
\begin{equation*}
\|T_{g,n}\|_{H^p\to H^p}
=\|V^{\,n-1}\circ T_{g,1}\|_{H^p\to H^p}
\le \|V^{\,n-1}\|_{H^p\to H^p}\,\|T_{g,1}\|_{H^p\to H^p}.
\end{equation*}
By Lemma~\ref{lem:Vk-sharp}, $\|V^{\,n-1}\|_{H^p\to H^p}=1/(n-1)!$, and therefore
\begin{equation*}
\|T_{g,n}\|_{H^p\to H^p}
\le \frac{1}{(n-1)!}\,\|T_{g,1}\|_{H^p\to H^p},
\end{equation*}
which proves \eqref{eq:Tgn-norm}.

Since the estimate follows from the inequality
\begin{equation*}
\|T_{g,n}\|\le \|V^{n-1}\|\cdot\|T_{g,1}\|,
\end{equation*}
and the factor $\|V^{n-1}\|=1/(n-1)!$ is sharp, this argument can obtain no smaller constant.
\end{proof}

\begin{corollary}\label{cor:BMOA-norm}
Let $1\le p<\infty$ and $n\in\mathbb{N}$. If $g\in\mathrm{BMOA}$, then
\begin{equation*}
\|T_{g,n}\|_{H^p\to H^p}
\lesssim_{n}
\frac{1}{(n-1)!}\,\|g\|_{\mathrm{BMOA}}.
\end{equation*}
\end{corollary}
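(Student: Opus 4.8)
The plan is to reduce everything to the first-order case and then invoke the classical Aleman--Siskakis bound. First I would apply Lemma~\ref{lem:norm-est}, which already contains the entire $n$-dependence: it gives, for every analytic $g$,
\[
\|T_{g,n}\|_{H^p\to H^p}\le \frac{1}{(n-1)!}\,\|T_{g,1}\|_{H^p\to H^p}.
\]
Thus the corollary follows at once provided one controls the single quantity $\|T_{g,1}\|_{H^p\to H^p}$ by $\|g\|_{\mathrm{BMOA}}$; the factor $1/(n-1)!$ will simply be carried along unchanged.

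For that control I would invoke the Aleman--Siskakis characterization \cite{Aleman}, recalled in the introduction, which identifies boundedness of $T_{g,1}=T_g$ on $H^p$ with membership $g\in\mathrm{BMOA}$. What is actually needed here is the \emph{quantitative} upper half of that statement, namely $\|T_{g,1}\|_{H^p\to H^p}\lesssim_p \|g\|_{\mathrm{BMOA}}$, which I would extract from the proof rather than from the bare equivalence. The mechanism is the standard one: since $(T_gf)'=f\,g'$ and $T_gf(0)=0$, a Littlewood--Paley (area-function) description of the $H^p$-norm reduces the estimate to the Carleson embedding $\int_{\mathbb D}|f|^p\,d\mu\lesssim_p\|f\|_{H^p}^p$ for the measure $d\mu=|g'(z)|^2(1-|z|^2)\,dA(z)$, whose Carleson constant is comparable to $\|g\|_{\mathrm{BMOA}}^2$ by the BMOA/Carleson equivalence stated in the introduction. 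Substituting this into the displayed inequality yields
\[
\|T_{g,n}\|_{H^p\to H^p}\le \frac{1}{(n-1)!}\,\|T_{g,1}\|_{H^p\to H^p}\lesssim_p \frac{1}{(n-1)!}\,\|g\|_{\mathrm{BMOA}},
\]
which is exactly the asserted bound; the implicit constant depends only on $p$, and the explicit $1/(n-1)!$ accounts for the $n$-dependence absorbed into $\lesssim_{n,p}$.

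The only genuine subtlety I anticipate is making sure the \emph{quantitative} comparison $\|T_{g,1}\|_{H^p\to H^p}\lesssim_p\|g\|_{\mathrm{BMOA}}$ is invoked, not merely the qualitative boundedness asserted by the iff statement. This is precisely the content of the upper-bound estimate underlying the Aleman--Siskakis theorem, so no new analysis is required beyond citing it in its effective form; everything else is a one-line substitution, and the sharp constant $1/(n-1)!$ is inherited directly from Lemma~\ref{lem:norm-est}.
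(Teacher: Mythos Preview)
Your proposal is correct and follows exactly the paper's own argument: apply Lemma~\ref{lem:norm-est} to reduce to $\|T_{g,1}\|_{H^p\to H^p}$, then invoke the quantitative Aleman--Siskakis bound $\|T_{g,1}\|_{H^p\to H^p}\lesssim_p\|g\|_{\mathrm{BMOA}}$. The additional commentary you give on the Littlewood--Paley/Carleson mechanism behind that bound is accurate but not needed for the corollary itself.
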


\begin{proof}
If $g\in\mathrm{BMOA}$, then by the Aleman--Siskakis theorem \cite{Aleman},
\begin{equation*}
\|T_{g,1}\|_{H^p\to H^p}\lesssim_p \|g\|_{\mathrm{BMOA}}.
\end{equation*}
Combining this with Lemma~\ref{lem:norm-est} yields the desired estimate.
\end{proof}

\begin{remark}\label{rem:sufficient-only}
Corollary~\ref{cor:BMOA-norm} provides a sufficient condition for the boundedness
of $T_{g,n}$ on $H^p$. For $n\ge 2$, this condition is not expected to be necessary in general; more refined symbol characterizations can be obtained by viewing $T_{g,n}$ as a special case of generalized integration operators.
\end{remark}

\begin{proposition}\label{prop:diff-id}
For all analytic functions $f,g$ in $\mathbb{D}$ and every $n\in\mathbb{N}$,
\begin{equation}\label{deff nth T}
\frac{d^{\,n}}{dz^{\,n}} T_{g,n}[f](z) = f(z)\, g'(z),\quad z\in\mathbb{D}.
\end{equation}
\end{proposition}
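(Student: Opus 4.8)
The plan is to reduce the statement to the fundamental theorem of calculus for holomorphic functions, exploiting the factorization already recorded in \eqref{eq:factorization}. The one structural fact I will rely on is that, because $\mathbb{D}$ is simply connected and every integrand appearing is analytic, each integral in \eqref{eq:def-Tgn} is path-independent; consequently, writing $V\varphi(z)=\int_0^z\varphi(t)\,dt$, the function $V\varphi$ is itself analytic and satisfies $(V\varphi)'(z)=\varphi(z)$ for every analytic $\varphi$. This holomorphic version of the fundamental theorem of calculus is the sole analytic input, and it is exactly the point where the path-independence remarked upon after \eqref{eq:def-Tgn} is used.

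Granting this, the argument is essentially bookkeeping. First I would observe that iterating $(V\varphi)'=\varphi$ gives $\frac{d^{\,k}}{dz^{\,k}}V^{k}\psi=\psi$ for every analytic $\psi$ and every $k\in\mathbb{N}$, since each intermediate function $V^{j}\psi$ is again analytic. Applying this with $k=n-1$ and $\psi=T_{g,1}[f]$ to the factorization $T_{g,n}=V^{\,n-1}\circ T_{g,1}$ then yields
\[
\frac{d^{\,n-1}}{dz^{\,n-1}}T_{g,n}[f](z)=T_{g,1}[f](z)=\int_0^z f(t)g'(t)\,dt.
\]
A single further application of the fundamental theorem of calculus to the right-hand side — now with integrand $fg'$, which is analytic as a product of analytic functions — gives $\frac{d}{dz}T_{g,1}[f](z)=f(z)g'(z)$, and hence $\frac{d^{\,n}}{dz^{\,n}}T_{g,n}[f](z)=f(z)g'(z)$, as claimed.

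Alternatively, I would present the same conclusion by a clean induction on $n$. The base case $n=1$ is precisely $(T_{g,1}[f])'=fg'$ above. For the inductive step, the iterated integral \eqref{eq:def-Tgn} factors as $T_{g,n}[f](z)=\int_0^z T_{g,n-1}[f](t_1)\,dt_1$, so that $\frac{d}{dz}T_{g,n}[f]=T_{g,n-1}[f]$; the inductive hypothesis $\frac{d^{\,n-1}}{dz^{\,n-1}}T_{g,n-1}[f]=fg'$ then closes the argument. A third route differentiates the convolution representation \eqref{eq:conv-repr} directly under the integral sign.

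I expect no serious obstacle here: the proposition is a formal consequence of the holomorphic fundamental theorem of calculus. The only point demanding care arises in the convolution route, where differentiating $\int_0^z(z-t)^{\,n-1}f(t)g'(t)\,dt$ produces a boundary term $(z-z)^{\,n-1}f(z)g'(z)$; this vanishes precisely because $n-1\ge 1$ when $n\ge 2$, so that one obtains $\frac{d}{dz}T_{g,n}[f]=T_{g,n-1}[f]$ rather than picking up an extra term. In the factorization and induction routes this subtlety is absorbed into the already-established identity \eqref{eq:factorization}, which I regard as the more transparent presentation.
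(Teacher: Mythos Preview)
Your proof is correct. The paper's own argument is precisely what you call the ``third route'': it differentiates the convolution representation \eqref{eq:conv-repr} under the integral sign, records that the boundary term $(z-z)^{n-1}f(z)g'(z)$ vanishes for $n\ge 2$, and iterates to reach $\int_0^z f(t)g'(t)\,dt$ before one final differentiation. Your primary presentation via the factorization $T_{g,n}=V^{\,n-1}\circ T_{g,1}$ and the identity $(V^k\psi)^{(k)}=\psi$ is a genuinely cleaner packaging of the same content: it hides the boundary-term bookkeeping inside the already-established \eqref{eq:factorization}, so the proof becomes a one-line application of the holomorphic fundamental theorem of calculus. The paper's convolution approach, by contrast, is self-contained (it does not invoke \eqref{eq:factorization}) and makes the vanishing of the boundary terms explicit, which some readers may prefer. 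Either way, the result is a formal consequence of the fundamental theorem of calculus, and you have correctly identified both the argument and the only place where any care is needed.
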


\begin{proof}
Let $h(t):=f(t)g'(t)$. Using the convolution representation \eqref{eq:conv-repr},
\begin{equation*}
T_{g,n}[f](z)=\frac{1}{(n-1)!}\int_0^z (z-t)^{n-1}h(t)\,dt.
\end{equation*}
Differentiate under the integral sign (Leibniz's rule). Since $(z-t)^{n-1}$ vanishes at $t=z$ for $n\ge2$, no boundary term appears when differentiating up to order $n-1$. By repeated differentiation under the integral sign, we obtain for $1\le k\le n-1$,
\begin{equation*}
\frac{d^k}{dz^k}\int_0^z (z-t)^{n-1}h(t)\,dt
= \frac{(n-1)!}{(n-1-k)!}\int_0^z (z-t)^{n-1-k}h(t)\,dt.
\end{equation*}
Taking $k=n-1$ gives
\begin{equation*}
\frac{d^{\,n-1}}{dz^{\,n-1}}T_{g,n}[f](z)
= \frac{1}{(n-1)!}\cdot (n-1)!\int_0^z h(t)\,dt
= \int_0^z h(t)\,dt.
\end{equation*}
Differentiating once more proves \eqref{deff nth T}.
\end{proof}

We are now ready to prove the first main result.

\begin{theorem}\label{thm:Hp-Tgn-Carleson}
Let $1\le p<\infty$ and $n\in\mathbb N$. Let $g$ be analytic in $\mathbb{D}$ and define $T_{g,n}$ by \eqref{eq:def-Tgn}. Then the operator $T_{g,n}:H^p\to H^p$ is bounded if and only if the measure
\begin{equation}\label{dmu bnp}
d\mu_{g,n,p}(z)=|g'(z)|^p(1-|z|^2)^{np-1}\,dA(z)
\end{equation}
is a Carleson measure for $H^p$.

Moreover, $T_{g,n}$ is compact on $H^p$ if and only if $\mu_{g,n,p}$ is a vanishing Carleson measure.
\end{theorem}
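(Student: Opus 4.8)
The plan is to reduce the boundedness and compactness of $T_{g,n}$ on $H^p$ to a Carleson embedding by describing $\|T_{g,n}f\|_{H^p}$ through the $n$-th derivative of $T_{g,n}f$. By Proposition~\ref{prop:diff-id}, $(T_{g,n}f)^{(n)}=fg'$, and from the iterated-integral form~\eqref{eq:def-Tgn} the function $T_{g,n}f$ and its first $n-1$ derivatives vanish at the origin. I would therefore invoke the higher-order Littlewood--Paley (Lusin area) description of $H^p$: for analytic $F$,
\[
\|F\|_{H^p}^p\asymp\sum_{j=0}^{n-1}\bigl|F^{(j)}(0)\bigr|^p+\int_{\mathbb T}\Bigl(\int_{\Gamma(\zeta)}\bigl|F^{(n)}(z)\bigr|^2(1-|z|^2)^{2n-2}\,dA(z)\Bigr)^{p/2}\,|d\zeta|,
\]
where $\Gamma(\zeta)$ is a fixed nontangential cone with vertex $\zeta\in\mathbb T$. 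Applying this to $F=T_{g,n}f$ annihilates the boundary terms and replaces $F^{(n)}$ by $fg'$, so that boundedness of $T_{g,n}$ becomes the square-function estimate $\|\mathcal Sf\|_{L^p(\mathbb T)}\lesssim\|f\|_{H^p}$, where $\mathcal Sf(\zeta)^2:=\int_{\Gamma(\zeta)}|f(z)g'(z)|^2(1-|z|^2)^{2n-2}\,dA(z)$.

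I would settle the case $p=2$ first, where the estimate is exact. Integrating the cone representation in $\zeta$ (Fubini on cones, equivalently Parseval together with a Beta-integral computation) collapses the square function to
\[
\|T_{g,n}f\|_{H^2}^2\asymp\int_{\mathbb D}|f(z)g'(z)|^2(1-|z|^2)^{2n-1}\,dA(z)=\int_{\mathbb D}|f(z)|^2\,d\mu_{g,n,2}(z).
\]
Hence $T_{g,n}$ is bounded on $H^2$ if and only if $\int_{\mathbb D}|f|^2\,d\mu_{g,n,2}\lesssim\|f\|_{H^2}^2$ for all $f\in H^2$, which by the Carleson embedding~\eqref{Carleson-property} is precisely the assertion that $\mu_{g,n,2}$ is a Carleson measure; the vanishing analogue of this identity gives the compactness statement for $p=2$ verbatim.

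For general $1\le p<\infty$ the square function carries a genuine $\ell^2$ structure inside each cone, so $\|\mathcal Sf\|_{L^p}$ is not a weighted $L^p$ norm of $f$ and the reduction to~\eqref{Carleson-property} is no longer direct. Here I would set $d\lambda_{g,n}(z):=|g'(z)|^2(1-|z|^2)^{2n-2}\,dA(z)$ and appeal to Luecking-type embedding theorems for area functions~\cite{Luecking1986}, which, after linearizing the square function by Khinchine's inequality, characterize the estimate $\bigl\|(\int_{\Gamma(\cdot)}|f|^2\,d\lambda_{g,n})^{1/2}\bigr\|_{L^p(\mathbb T)}\lesssim\|f\|_{H^p}$ by a single Carleson box condition. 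The decisive step is to identify that box condition with $\mu_{g,n,p}(S(I))\lesssim|I|$: a Fubini-over-cones computation must convert the $\lambda_{g,n}$ square-function condition (weight $2n-2$) into the $\mu_{g,n,p}$ Carleson condition (weight $np-1$), reconciling the two exponents. I expect this reconciliation --- together with the correct use of Khinchine/Luecking to pass between the $\ell^2$ cone integral and the linear Carleson measure --- to be the main obstacle, as it is exactly the point at which the value of $p$ enters nontrivially.

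The boundedness equivalence then comes, in both directions, packaged inside this Luecking-type theorem, so it remains to treat compactness. For necessity I would use the normalized test functions $f_a(z)=\bigl((1-|a|^2)/(1-\overline a z)^2\bigr)^{1/p}$, $a\in\mathbb D$, which satisfy $\|f_a\|_{H^p}\asymp1$, $|f_a|\asymp(1-|a|^2)^{-1/p}$ on a fixed pseudohyperbolic disk $D(a)$, and $f_a\to0$ weakly as $|a|\to1^-$; compactness forces $\|T_{g,n}f_a\|_{H^p}\to0$, and tracking this through the norm description yields $\mu_{g,n,p}(S(I))/|I|\to0$. For sufficiency I would show that a vanishing Carleson measure renders the square-function estimate compact by splitting each cone integral into a boundary layer $1-|z|<\delta$, whose contribution is uniformly small for $\|f\|_{H^p}\le1$ by the vanishing condition, and an inner part on a compact subset of $\mathbb D$ on which $T_{g,n}$ acts compactly by a normal-families argument; a standard $\varepsilon$-approximation then exhibits $T_{g,n}$ as a norm limit of compact operators. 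Combined with the norm description of the first paragraph, this closes both equivalences, the only nonroutine ingredient being the general-$p$ reconciliation above.
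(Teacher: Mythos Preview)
Your route diverges from the paper's at the central step. You invoke the Lusin area (square-function) description of $H^p$ with an inner $L^2$ cone integral, which for $p\neq 2$ forces you to reconcile the tent-space embedding for $d\lambda_{g,n}=|g'|^2(1-|z|^2)^{2n-2}\,dA$ with the Carleson condition for $d\mu_{g,n,p}=|g'|^p(1-|z|^2)^{np-1}\,dA$. You correctly flag this as the main obstacle and leave it open; but the tools you name (Khinchine linearization, Luecking's multiplier theorems) do not obviously produce the specific box condition $\mu_{g,n,p}(S(I))\lesssim|I|$: the natural Carleson condition emerging from the embedding $H^p\hookrightarrow T^p_2(\lambda_{g,n})$ is a condition on $\lambda_{g,n}$, and it does not carry the exponent $p$ in the weight. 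As written, your argument is complete only for $p=2$.

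The paper bypasses this entirely by using the \emph{radial} higher-order Littlewood--Paley equivalence
\[
\|F\|_{H^p}^p\ \asymp_{p,n}\ |F(0)|^p+\int_{\mathbb D}|F^{(n)}(z)|^p(1-|z|^2)^{np-1}\,dA(z),
\]
valid for all $1\le p<\infty$ (the paper's equation~\eqref{eq:LP}, cited from Zhu). Combined with $(T_{g,n}f)^{(n)}=fg'$ from Proposition~\ref{prop:diff-id}, this gives $\|T_{g,n}f\|_{H^p}^p\asymp\int_{\mathbb D}|f|^p\,d\mu_{g,n,p}$ in one stroke for every $p$, and both directions of the boundedness statement then reduce to the classical Carleson embedding~\eqref{Carleson-property}. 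The test-function argument for necessity and the compactness equivalences are essentially the same as yours. In short, the reconciliation you anticipate is precisely what is packaged inside the radial norm equivalence; using that equivalence from the outset, rather than the conical $L^2$ version, removes the obstacle altogether.
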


\begin{proof}
\textit{Boundedness.}
Assume that $ \mu_{g,n,p} $ defined in \eqref{dmu bnp} is a Carleson measure for $ H^p $. We show that $ T_{g,n} $ is bounded on $ H^p $.

First, we recall the Hardy-Sobolev norm equivalence for analytic functions in $ H^p $. For any analytic function $ F $ on the unit disk $ \mathbb{D} $ and for every integer $ n \geq 1 $, the following equivalence holds:
\begin{equation}\label{eq:LP}
\|F\|_{H^p}^p \asymp_{p,n} |F(0)|^p + \int_{\mathbb{D}} |F^{(n)}(z)|^p (1 - |z|^2)^{np - 1} \, dA(z),
\end{equation}
which is a standard result for Hardy spaces (see \cite[Proposition 6.2]{Zhu2005}). Since $T_{g,n}=0$, the term $|F(0)|^p$ vanishes, and thus \eqref{eq:LP} yields
\begin{align*}
\|T_{g,n}[f]\|_{H^p}^p
&\asymp_{p,n}
\int_{\mathbb D} |(T_{g,n}[f])^{(n)}(z)|^p (1-|z|^2)^{np-1}\,dA(z)\\
&=
\int_{\mathbb D} |f(z)|^p |g'(z)|^p (1-|z|^2)^{np-1}\,dA(z)
=\int_{\mathbb D}|f(z)|^p\,d\mu_{g,n,p}(z).
\end{align*}
By the Carleson embedding theorem \eqref{Carleson-property},
\begin{equation*}
\int_{\mathbb D}|f(z)|^p\,d\mu_{g,n,p}(z)\lesssim \|f\|_{H^p}^p,
\end{equation*}
and therefore $\|T_{g,n}[f]\|_{H^p}\lesssim \|f\|_{H^p}$, proving boundedness.

Conversely, assume that $T_{g,n}:H^p\to H^p$ is bounded. Let $I\subset\mathbb T$ be an arc and $S(I)$ its Carleson box.
Choose $a\in\mathbb D$ such that $1-|a|\asymp |I|$ and $a/|a|$ lies above the midpoint of $I$, see Figure \ref{fig:carleson-box}.

\begin{figure}
    \centering
\begin{tikzpicture}[scale=4.0, line cap=round, line join=round]
  \def\thetaA{25}      
  \def\thetaB{55}      
  \def\thetaM{40}      
  \def\h{0.30}         
  \def\r{0.70}         

  \draw[thick] (0,0) circle (1);
  \fill (0,0) circle (0.015);
  \node at (-0.12,-0.10) {$0$};

  \draw[very thick, blue] ({cos(\thetaA)},{sin(\thetaA)}) arc (\thetaA:\thetaB:1);
  \node at (1,0.6) {$I\subset\mathbb T$};

  \draw[dashed] (0,0) circle ({1-\h});

  \draw[thick]
    ({(1-\h)*cos(\thetaA)},{(1-\h)*sin(\thetaA)}) -- ({cos(\thetaA)},{sin(\thetaA)});
  \draw[thick]
    ({(1-\h)*cos(\thetaB)},{(1-\h)*sin(\thetaB)}) -- ({cos(\thetaB)},{sin(\thetaB)});

  \draw[thick]
    ({(1-\h)*cos(\thetaA)},{(1-\h)*sin(\thetaA)})
    arc (\thetaA:\thetaB:{1-\h});

  \node at (0.59,0.64) {$S(I)$};

  \draw[dotted] (0,0) -- ({1.02*cos(\thetaM)},{1.02*sin(\thetaM)});

  \coordinate (ptA) at ({\r*cos(\thetaM)},{\r*sin(\thetaM)});
  \coordinate (ptU) at ({cos(\thetaM)},{sin(\thetaM)});

  \fill (ptA) circle (0.02);
  \node at (0.5,0.35) {$a$};

  \fill (ptU) circle (0.02);
  \node at (0.8,0.76) {$\frac{a}{|a|}$};

  \node at (-0.9,-0.8) {$\mathbb D$};

  \draw[<->] (ptA) -- (ptU);

  \coordinate (midSeg) at ($(ptA)!0.5!(ptU)$);

  \node[fill=white, inner sep=2pt] (lab) at (1.15,-0.25) {$1-|a|\asymp |I|$};
  \draw[-{Stealth[length=2mm]}] (lab.west) -- (midSeg);
\end{tikzpicture}
   \caption{A Carleson box $S(I)$ associated with an arc $I\subset\mathbb T$ (blue) and a point
$a\in\mathbb D$ chosen above the midpoint of $I$, with $1-|a|\asymp |I|$.}
    \label{fig:carleson-box}
\end{figure}
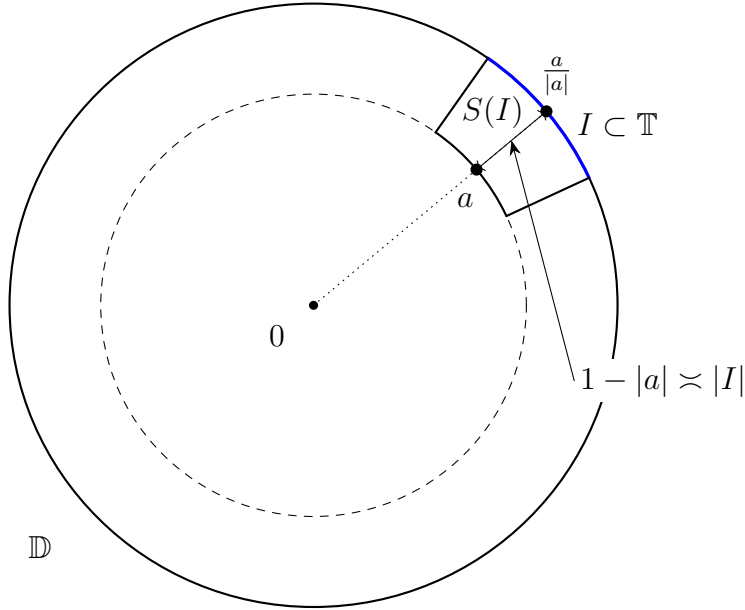

Consider the standard normalized test function
\begin{equation*}
k_a(z):=\left(\frac{1-|a|^2}{(1-\overline a z)^2}\right)^{1/p}.
\end{equation*}
Then $\|k_a\|_{H^p}\asymp 1$, and for $z\in S(I)$ one has $|1-\overline a z|\asymp 1-|a|\asymp |I|$. Hence
\begin{equation}\label{eq:ka-on-SI}
|k_a(z)|^p=\frac{1-|a|^2}{|1-\overline a z|^2}\asymp \frac{1}{|I|}, \quad z\in S(I),
\end{equation}
see \cite[Ch.~2]{Garnett-book}. By boundedness,
\begin{equation*}
\|T_{g,n}[k_a]\|_{H^p}^p\lesssim \|k_a\|_{H^p}^p\asymp 1.
\end{equation*}
Using \eqref{eq:LP} and Proposition~\ref{prop:diff-id} again gives
\begin{align*}
\|T_{g,n}[k_a]\|_{H^p}^p
&\asymp_{p,n}
\int_{\mathbb D}|k_a(z)|^p |g'(z)|^p (1-|z|^2)^{np-1}\,dA(z)\\
&\ge
\int_{S(I)}|k_a(z)|^p |g'(z)|^p (1-|z|^2)^{np-1}\,dA(z).
\end{align*}
For $z\in S(I)$ we also have $1-|z|^2\asymp |I|$, hence by \eqref{eq:ka-on-SI},
\begin{align*}
1
&\gtrsim \|T_{g,n}[k_a]\|_{H^p}^p\\
&\gtrsim
\int_{S(I)} |k_a(z)|^p |g'(z)|^p (1-|z|^2)^{np-1}\,dA(z)
\asymp
\frac{1}{|I|}\int_{S(I)} |g'(z)|^p (1-|z|^2)^{np-1}\,dA(z)\\
&=
\frac{\mu_{g,n,p}(S(I))}{|I|}.
\end{align*}
Thus, $\mu_{g,n,p}(S(I))\lesssim |I|$ uniformly in $I$, so $\mu_{g,n,p}$ is a Carleson measure.

\medskip
\noindent\textit{Compactness.}
Assume first that $\mu_{g,n,p}$ is a vanishing Carleson measure. It is a standard fact that
vanishing Carleson measures are precisely those measures for which the embedding
\begin{equation*}
H^p \hookrightarrow L^p(\mathbb D,\mu)
\end{equation*}
is compact (see, \cite[Ch.~3]{Garnett-book}).
Thus, if $\{f_k\}$ is bounded in $H^p$ and $f_k\to 0$ uniformly on compact subsets of $\mathbb D$, then
\begin{equation}\label{eq:compact-embed}
\int_{\mathbb D} |f_k(z)|^p\,d\mu_{g,n,p}(z)\longrightarrow 0.
\end{equation}
Using \eqref{eq:LP} and Proposition~\ref{prop:diff-id} as before, we obtain
\begin{equation*}
\|T_{g,n}[f_k]\|_{H^p}^p
\asymp_{p,n}
\int_{\mathbb D} |f_k(z)|^p\,d\mu_{g,n,p}(z)\longrightarrow 0,
\end{equation*}
which shows that $T_{g,n}$ is compact on $H^p$.

Conversely, suppose that $T_{g,n}$ is compact on $H^p$.
Let $\{a_j\}\subset \mathbb D$ with $|a_j|\to 1$. Then $k_{a_j}\to 0$ uniformly on compact subsets of $\mathbb D$
and $\|k_{a_j}\|_{H^p}\asymp 1$, hence compactness implies
\begin{equation*}
\|T_{g,n}[k_{a_j}]\|_{H^p}\longrightarrow 0 \quad \text{as }j\to\infty.
\end{equation*}
Let $I_j$ be the arc associated with $a_j$ as in the boundedness part, so that $1-|a_j|\asymp |I_j|$ and $S(I_j)$
is the corresponding Carleson box. Repeating the lower bound estimate above yields
\begin{equation*}
\|T_{g,n}[k_{a_j}]\|_{H^p}^p
\gtrsim \frac{\mu_{g,n,p}(S(I_j))}{|I_j|}.
\end{equation*}
Therefore, $\mu_{g,n,p}(S(I_j))/|I_j|\to 0$ as $|I_j|\to 0$. Since every Carleson box arises (up to uniform comparability) from such a choice of $a_j$, this proves the vanishing Carleson condition. Hence, $\mu_{g,n,p}$ is a vanishing Carleson measure.
\end{proof}


\begin{remark}\label{rem:BMOA_fails_nge2}
For $n=1$ one has the classical Aleman--Siskakis characterizations
\begin{equation*}
T_{g,1}\ \text{bounded on }H^p \iff g\in \mathrm{BMOA},
\quad\text{and}\quad
T_{g,1}\ \text{compact on }H^p \iff g\in \mathrm{VMOA},
\end{equation*}
where $1\le p<\infty$.
However, for $n\ge2$, the additional smoothing produced by iterated
integration changes the behavior of $T_{g,n}$, and boundedness
(compactness) can no longer be captured solely by membership of the
symbol $g$ in $\mathrm{BMOA}$ ($\mathrm{VMOA}$). As a concrete
illustration, consider the symbol defined by
\begin{equation*}
g'(z)=\frac{1}{(1-z)^{3/2}},\quad z\in\mathbb{D},
\end{equation*}
and let $g$ be its primitive with $g(0)=0$. We claim that
$g\notin\mathrm{BMOA}$ but $T_{g,2}$ is bounded on $H^2$.

\smallskip
\noindent\textit{i) $g\notin\mathrm{BMOA}$.}
Recall that $g\in \mathrm{BMOA}$ if and only if the measure
\begin{equation*}
d\mu(z)=|g'(z)|^2(1-|z|^2)\,dA(z)
= \frac{1-|z|^2}{|1-z|^{3}}\,dA(z)
\end{equation*}
is a Carleson measure for $H^2$.
Let $I_\ell=\{e^{i\theta}:|\theta|<\ell\}$ and $S(I_\ell)$ be the
associated Carleson box. For $z=re^{i\theta}\in S(I_\ell)$, one has
\begin{equation*}
|1-z|^2 \asymp (1-r)^2+\theta^2,\qquad 1-|z|^2\asymp 1-r,
\end{equation*}
so
\begin{equation*}
d\mu(z)\asymp \frac{1-r}{\bigl((1-r)^2+\theta^2\bigr)^{3/2}}\,dA(z).
\end{equation*}
Setting $t=1-r$ and integrating over $S(I_\ell)$,
\begin{equation*}
\mu(S(I_\ell))
\asymp \int_0^\ell \int_{-\ell}^{\ell}
\frac{t}{(t^2+\theta^2)^{3/2}}\, d\theta\, dt.
\end{equation*}
For the inner integral, the substitution $\theta = ts$ gives
\begin{equation*}
\int_{-\ell}^{\ell} \frac{t}{(t^2+\theta^2)^{3/2}}\,d\theta
= \frac{1}{t}\int_{-\ell/t}^{\ell/t}\frac{ds}{(1+s^2)^{3/2}}
\asymp \frac{1}{t},
\end{equation*}
where the last step uses the fact that
$\int_{-\infty}^{\infty}\frac{ds}{(1+s^2)^{3/2}}=2<\infty$
and $\ell/t\to\infty$ as $t\to 0^+$. Hence
\begin{equation*}
\mu(S(I_\ell))
\asymp \int_0^\ell \frac{dt}{t},
\end{equation*}
which diverges as $t\to 0^+$. Thus, $\mu$ is not a Carleson measure
and $g\notin \mathrm{BMOA}$.

\smallskip
\noindent\textit{ii) $T_{g,2}$ is bounded on $H^2$.}
By Theorem~\ref{thm:Hp-Tgn-Carleson} with $(n,p)=(2,2)$, $T_{g,2}$
is bounded if and only if
\begin{equation*}
d\mu_{g,2,2}(z)=|g'(z)|^2(1-|z|^2)^3\,dA(z)
=\frac{(1-|z|^2)^3}{|1-z|^{3}}\,dA(z)
\end{equation*}
is a Carleson measure. Using $|1-z|\ge 1-|z|$ and
$1-|z|^2\le 2(1-|z|)$, we obtain
\begin{equation*}
\frac{(1-|z|^2)^3}{|1-z|^{3}}
\le \frac{8(1-|z|)^3}{(1-|z|)^3}
=8.
\end{equation*}
Hence, for every arc $I\subseteq\partial\mathbb{D}$,
\begin{equation*}
\mu_{g,2,2}(S(I))
\le 8\,\mathrm{Area}(S(I))
\asymp |I|^2
\le 2\pi\,|I|,
\end{equation*}
where the last step uses $|I|\le 2\pi$. Thus, $\mu_{g,2,2}$ is a
Carleson measure and $T_{g,2}$ is bounded on $H^2$.

More generally, consider the family
\begin{equation*}
g'(z) = (1-z)^{-\alpha}, \quad z\in\mathbb{D},
\end{equation*}
with $1<\alpha\le 3/2$.
A computation analogous to part~(i), using
\begin{equation*}
|1-z|^2 \asymp (1-r)^2+\theta^2,
\end{equation*}
shows that
\begin{equation*}
\mu(S(I_\ell))
\asymp \int_0^\ell \int_{-\ell}^{\ell}
\frac{t}{(t^2+\theta^2)^{\alpha}}\, d\theta\, dt.
\end{equation*}
Again, by the same argument as analogous to part~(i), we get
\begin{equation*}
\int_{-\ell}^{\ell} \frac{t}{(t^2+\theta^2)^{\alpha}}\, d\theta
\asymp t^{1-2\alpha},
\end{equation*}
so that
\begin{equation*}
\mu(S(I_\ell))
\asymp \int_0^\ell t^{1-2\alpha}\, dt,
\end{equation*}
which diverges for all $\alpha\ge 1$.

Therefore, $g\notin\mathrm{BMOA}$ for all $\alpha\geq 1$.
On the other hand, for $\alpha\le 3/2$ the density of $\mu_{g,2,2}$
satisfies
\begin{equation*}
\frac{(1-|z|^2)^3}{|1-z|^{2\alpha}}
\asymp
\frac{(1-|z|)^3}{|1-z|^{2\alpha}}
\leq C,
\end{equation*}
since $2\alpha\leq 3$, so the pointwise bound gives the Carleson
condition by the same argument as above.
Consequently, $T_{g,2}$ is bounded on $H^2$ for all
$\alpha\in(1,3/2]$, while $g\notin\mathrm{BMOA}$.
This shows that the $\mathrm{BMOA}$ criterion for $T_{g,1}$ does
not extend to the higher-order operator $T_{g,2}$.
\end{remark}

Next, we present an alternative method to study $T_{g,n}$ using the simpler operators $A_{k,g}$,
which will be useful for the second main result.


\begin{lemma}\label{lem:fin dec}
Let $n\in\mathbb{N}$ and let $f,g$ be analytic on $\mathbb{D}$. Then, for every
$z\in\mathbb{D}$,
\begin{equation}\label{eq:Tn_linear_comb}
T_{g,n}[f](z)
=\frac{1}{(n-1)!}\sum_{k=0}^{n-1}
(-1)^k\binom{n-1}{k}\,z^{\,n-1-k}\,A_{k,g}[f](z),
\end{equation}
where $A_{k,g}$ is defined by \eqref{Ak}.
\end{lemma}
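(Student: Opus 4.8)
The plan is to start from the convolution representation \eqref{eq:conv-repr}, namely
\[
T_{g,n}[f](z)=\frac{1}{(n-1)!}\int_0^z (z-t)^{\,n-1}f(t)g'(t)\,dt,
\]
and to expand the kernel factor $(z-t)^{n-1}$ by the binomial theorem. Writing
\[
(z-t)^{n-1}=\sum_{k=0}^{n-1}\binom{n-1}{k}z^{\,n-1-k}(-t)^k
=\sum_{k=0}^{n-1}(-1)^k\binom{n-1}{k}z^{\,n-1-k}\,t^k,
\]
I substitute this finite sum into the integrand. The key observation is that, for the purposes of integrating in the variable $t$ along the segment from $0$ to $z$, the quantity $z$ is a fixed parameter, so each factor $z^{\,n-1-k}$ is constant with respect to $t$ and may be pulled outside the integral.

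Next I would interchange the finite sum with the integral, which is justified merely by linearity of the integral (no convergence issue arises, since the sum has finitely many terms). This yields
\[
T_{g,n}[f](z)
=\frac{1}{(n-1)!}\sum_{k=0}^{n-1}(-1)^k\binom{n-1}{k}z^{\,n-1-k}
\int_0^z t^k f(t)g'(t)\,dt.
\]
Finally, I recognize the remaining integral as precisely $A_{k,g}[f](z)$ by the definition \eqref{Ak}, which produces the claimed identity \eqref{eq:Tn_linear_comb}.

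I do not expect any genuine obstacle here: the argument is an elementary term-by-term expansion, and the only point requiring a word of care is the interchange of summation and integration, which is immediate because the binomial sum is finite. The identity then holds pointwise for every $z\in\mathbb{D}$, as required, and is consistent with \eqref{eq:Tn-reduced} stated earlier in the introduction.
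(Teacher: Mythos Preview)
Your proposal is correct and follows essentially the same argument as the paper: start from the convolution representation \eqref{eq:conv-repr}, expand $(z-t)^{n-1}$ via the binomial theorem, interchange the finite sum with the integral, and identify each inner integral as $A_{k,g}[f](z)$. The paper's proof is line-for-line the same, including the remark that the interchange is immediate because the sum is finite.
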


\begin{proof}
Fix $z\in\mathbb{D}$. By the binomial theorem,
\begin{equation}\label{eq:binomial_expansion}
(z-t)^{n-1}=\sum_{k=0}^{n-1}(-1)^k\binom{n-1}{k} z^{\,n-1-k} t^k,
\quad t\in\mathbb{C}.
\end{equation}
Substituting \eqref{eq:binomial_expansion} into the convolution representation
\eqref{eq:conv-repr} yields
\begin{align*}
T_{g,n}[f](z)
&=\frac{1}{(n-1)!}\int_0^z (z-t)^{n-1} f(t)g'(t)\,dt\\
&=\frac{1}{(n-1)!}\int_0^z
\left(\sum_{k=0}^{n-1}(-1)^k\binom{n-1}{k} z^{\,n-1-k} t^k\right) f(t)g'(t)\,dt.
\end{align*}
Since the sum is finite, we may interchange summation and integration to obtain
\begin{align*}
T_{g,n}[f](z)
&=\frac{1}{(n-1)!}\sum_{k=0}^{n-1}(-1)^k\binom{n-1}{k} z^{\,n-1-k}
\int_0^z t^k f(t)g'(t)\,dt\\
&=\frac{1}{(n-1)!}\sum_{k=0}^{n-1}(-1)^k\binom{n-1}{k}\,z^{\,n-1-k}\,A_{k,g}[f](z),
\end{align*}
which is \eqref{eq:Tn_linear_comb}.
\end{proof}


\begin{theorem}\label{thm:Tn_decomposition_revised}
Let $X$ and $Y$ be Banach spaces of analytic functions in $\mathbb{D}$, and assume that
for each integer $m\ge0$ the multiplication operator
\begin{equation*}
M_{z^m}:Y\to Y,\quad (M_{z^m}F)(z)=z^mF(z),
\end{equation*}
is bounded. Then:
\begin{enumerate}
\item[(a)] If $A_{k,g}:X\to Y$ is bounded for every $0\le k\le n-1$, then $T_{g,n}:X\to Y$ is bounded.
\item[(b)] If $A_{k,g}:X\to Y$ is compact for every $0\le k\le n-1$, then $T_{g,n}:X\to Y$ is compact.
\end{enumerate}
\end{theorem}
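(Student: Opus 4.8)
The plan is to read off the statement directly from the finite decomposition in Lemma~\ref{lem:fin dec} and then invoke only standard operator-theoretic facts. Writing $M_{z^m}$ for multiplication by $z^m$, the identity \eqref{eq:Tn_linear_comb} can be recast in operator form as
\[
T_{g,n} = \frac{1}{(n-1)!}\sum_{k=0}^{n-1}(-1)^k\binom{n-1}{k}\, M_{z^{\,n-1-k}}\circ A_{k,g},
\]
where each exponent $n-1-k$ is a nonnegative integer precisely because $0\le k\le n-1$. This exhibits $T_{g,n}$ as a finite linear combination of compositions $M_{z^{\,n-1-k}}\circ A_{k,g}\colon X\to Y$, and the hypothesis on $Y$ is exactly what guarantees that each outer factor $M_{z^{\,n-1-k}}\colon Y\to Y$ is bounded.

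For part (a), I would observe that by assumption each $A_{k,g}\colon X\to Y$ is bounded, and by the standing hypothesis each $M_{z^{\,n-1-k}}\colon Y\to Y$ is bounded; hence each composition $M_{z^{\,n-1-k}}\circ A_{k,g}$ is a bounded operator from $X$ to $Y$. Since a finite linear combination of bounded operators between two fixed Banach spaces is again bounded, $T_{g,n}\colon X\to Y$ is bounded, with an explicit norm bound obtained by the triangle inequality and submultiplicativity from the $\|M_{z^{\,n-1-k}}\|$ and $\|A_{k,g}\|$.

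For part (b), the only modification is to use the ideal property of compact operators: if $A_{k,g}\colon X\to Y$ is compact and $M_{z^{\,n-1-k}}\colon Y\to Y$ is bounded, then the composition $M_{z^{\,n-1-k}}\circ A_{k,g}$ is compact. Because the compact operators form a (closed) linear subspace of the space of bounded operators $X\to Y$, the finite linear combination displayed above is compact, and therefore $T_{g,n}\colon X\to Y$ is compact.

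The argument is essentially bookkeeping, and I do not anticipate a genuine obstacle. The single point where the hypotheses are actually used is the boundedness of the multiplication operators $M_{z^m}$ on $Y$, which is what permits each term $M_{z^{\,n-1-k}}\circ A_{k,g}$ to map boundedly (respectively, compactly) into $Y$; the only thing to check carefully is that the exponents $n-1-k$ are nonnegative, so that the assumed boundedness of $M_{z^m}$ for $m\ge 0$ indeed applies to every summand.
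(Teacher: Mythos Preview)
Your proof is correct and follows essentially the same approach as the paper: rewrite $T_{g,n}$ via Lemma~\ref{lem:fin dec} as the finite linear combination $\frac{1}{(n-1)!}\sum_{k}(-1)^k\binom{n-1}{k}M_{z^{n-1-k}}\circ A_{k,g}$, then use that compositions of bounded operators are bounded (respectively, bounded composed with compact is compact) and that finite sums preserve boundedness (respectively, compactness).
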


\begin{proof}
(a) Suppose $A_{k,g}:X\to Y$ is bounded for $0\le k\le n-1$. By hypothesis, $M_{z^{n-1-k}}$
is bounded on $Y$, hence the composition $M_{z^{n-1-k}}\circ A_{k,g}:X\to Y$ is bounded.
Lemma~\ref{lem:fin dec} gives
\begin{equation*}
T_{g,n}
=\frac{1}{(n-1)!}\sum_{k=0}^{n-1}(-1)^k\binom{n-1}{k}\,M_{z^{n-1-k}}\circ A_{k,g},
\end{equation*}
a finite linear combination of bounded operators. Therefore, $T_{g,n}:X\to Y$ is bounded.

\noindent
(b) Suppose $A_{k,g}:X\to Y$ is compact for $0\le k\le n-1$. Since $M_{z^{n-1-k}}$ is
bounded on $Y$, the composition $M_{z^{n-1-k}}\circ A_{k,g}:X\to Y$ is compact.
Using the same identity from Lemma~\ref{lem:fin dec}, $T_{g,n}$ is a finite sum of compact
operators, hence compact. The proof is now finished.
\end{proof}

\section{Mapping into Bloch and Bergman spaces}

In this section, we examine the higher-order Volterra-type integral operator $T_{g,n}$ in relation to Bloch and Bergman spaces.


\begin{theorem}\label{thm:bloch}
Let $1\le p<\infty$ and let $g$ be analytic in $\mathbb D$.
\begin{enumerate}
\item[(i)] The operator $T_{g,1}:H^p\to\mathcal B$ is bounded if and only if
\begin{equation}\label{eq:bloch-symbol-n1}
\|g\|_{\mathcal B_{p}}
:=\sup_{z\in\mathbb D}(1-|z|^2)^{\,1-\frac1p}\,|g'(z)|<\infty.
\end{equation}
Moreover,
\begin{equation*}
\|T_{g,1}\|_{H^p\to\mathcal B}\asymp_{p} \|g\|_{\mathcal B_{p}} .
\end{equation*}

\item[(ii)] Let $n\ge2$. If $T_{g,n-1}:H^p\to H^p$ is bounded, then $T_{g,n}:H^p\to\mathcal B$
is bounded and
\begin{equation}\label{eq:Hp-to-bloch-n}
\|T_{g,n}\|_{H^p\to\mathcal B}\lesssim_{p}\|T_{g,n-1}\|_{H^p\to H^p}.
\end{equation}
In particular, if $g\in\mathrm{BMOA}$, then for every $n\ge2$,
\begin{equation*}
\|T_{g,n}\|_{H^p\to\mathcal B}\lesssim_{p}\frac{1}{(n-2)!}\,\|g\|_{\mathrm{BMOA}}.
\end{equation*}
\end{enumerate}
\end{theorem}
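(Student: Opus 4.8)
The plan is to reduce both parts to two elementary ingredients: the fact that differentiating $T_{g,n}$ lowers its order, and the standard pointwise growth estimate $|F(z)|\le C_p(1-|z|^2)^{-1/p}\|F\|_{H^p}$ valid for every $F\in H^p$. For part (i) I would first record that $(T_{g,1}[f])'(z)=f(z)g'(z)$ while $T_{g,1}[f](0)=0$, so that by the definition of the Bloch norm
\[
\|T_{g,1}[f]\|_{\mathcal B}=\sup_{z\in\mathbb D}(1-|z|^2)\,|f(z)|\,|g'(z)|.
\]
To establish sufficiency I would insert the growth bound $|f(z)|\le C_p(1-|z|^2)^{-1/p}\|f\|_{H^p}$ into the right-hand side; this produces exactly the factor $(1-|z|^2)^{1-1/p}|g'(z)|$ and hence $\|T_{g,1}[f]\|_{\mathcal B}\lesssim_p\|g\|_{\mathcal B_p}\|f\|_{H^p}$, giving both boundedness and the upper half of the norm equivalence.

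For necessity and the matching lower bound I would test against the normalized functions $k_a$ already used in the proof of Theorem~\ref{thm:Hp-Tgn-Carleson}, which satisfy $\|k_a\|_{H^p}\asymp1$. Evaluating the Bloch seminorm at the single point $z=a$ and using $k_a(a)=(1-|a|^2)^{-1/p}$ gives
\[
\|T_{g,1}[k_a]\|_{\mathcal B}\ge(1-|a|^2)\,|k_a(a)|\,|g'(a)|=(1-|a|^2)^{1-1/p}|g'(a)|.
\]
Dividing by $\|k_a\|_{H^p}\asymp1$ and taking the supremum over $a\in\mathbb D$ then yields $\|T_{g,1}\|_{H^p\to\mathcal B}\gtrsim\|g\|_{\mathcal B_p}$, which together with the upper bound completes part (i).

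For part (ii) the key observation is the one-step factorization $T_{g,n}=V\circ T_{g,n-1}$, which follows from \eqref{eq:factorization} upon writing $V^{\,n-1}=V\circ V^{\,n-2}$. This immediately gives $(T_{g,n}[f])'(z)=T_{g,n-1}[f](z)$ and $T_{g,n}[f](0)=0$, so that
\[
\|T_{g,n}[f]\|_{\mathcal B}=\sup_{z\in\mathbb D}(1-|z|^2)\,|T_{g,n-1}[f](z)|.
\]
I would then apply the growth estimate to the function $F=T_{g,n-1}[f]\in H^p$: since $p\ge1$ forces $1-1/p\ge0$, the factor $(1-|z|^2)^{1-1/p}$ is bounded by $1$, whence $(1-|z|^2)|T_{g,n-1}[f](z)|\le C_p\|T_{g,n-1}[f]\|_{H^p}$ for every $z$. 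Taking the supremum and invoking boundedness of $T_{g,n-1}$ on $H^p$ yields \eqref{eq:Hp-to-bloch-n}. The concluding ``in particular'' assertion follows by substituting the bound of Corollary~\ref{cor:BMOA-norm} with index $n-1$, which supplies precisely the factor $1/(n-2)!$.

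I expect no serious obstacle: both parts rest on elementary estimates once the identities $(T_{g,1}[f])'=fg'$ and $(T_{g,n}[f])'=T_{g,n-1}[f]$ are in place. The only points requiring care are the uniformity in $z$ of the constant in the $H^p$ growth estimate and the observation that $(1-|z|^2)^{1-1/p}\le1$ throughout $\mathbb D$ when $p\ge1$ (degenerating to the constant $1$ at $p=1$); both are routine.
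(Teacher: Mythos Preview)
Your proposal is correct and follows essentially the same route as the paper: both parts hinge on the identities $(T_{g,1}[f])'=fg'$ and $(T_{g,n}[f])'=T_{g,n-1}[f]$, the Hardy-space growth bound $|F(z)|\lesssim_p(1-|z|^2)^{-1/p}\|F\|_{H^p}$, and the test functions $k_a$ for the lower bound in (i). The only cosmetic differences are that the paper obtains $(T_{g,n}[f])'=T_{g,n-1}[f]$ by differentiating the convolution formula rather than via $T_{g,n}=V\circ T_{g,n-1}$, and spells out the final BMOA estimate through Lemma~\ref{lem:Vk-sharp} and Aleman--Siskakis instead of citing Corollary~\ref{cor:BMOA-norm}; these are equivalent.
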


\begin{proof}
We write $\|\cdot\|_{\mathcal B}^*:=\sup_{z\in\mathbb D}(1-|z|^2)|(\cdot)'(z)|$ for the Bloch seminorm.
Since $T_{g,n}[f](0)=0$ for all $n\ge1$, boundedness into the Bloch space $\mathcal B$
(with norm $|F(0)|+\|F\|_{\mathcal B}^*$) is equivalent here to boundedness of the seminorm.

We first recall that for $0<p<\infty$ and $f\in H^p$ one has the growth estimate
\begin{equation*}
|F(z)| \le C_p\,\frac{\|F\|_{H^p}}{(1-|z|)^{1/p}},\quad z\in\mathbb D,
\end{equation*}
see \cite[p. 36]{Duren1970}. Equivalently, since $1-|z|^2\asymp 1-|z|$,
\begin{equation}\label{eq:Hp-growth}
|F(z)| \le C'_p\,\frac{\|F\|_{H^p}}{(1-|z|^2)^{1/p}},\quad z\in\mathbb D.
\end{equation}
Multiplying \eqref{eq:Hp-growth} by $(1-|z|^2)$ and using $(1-|z|^2)^{1-1/p}\le 1$ gives
\begin{equation}\label{eq:Hp-to-growth}
\sup_{z\in\mathbb D}(1-|z|^2)\,|F(z)|
\le C'_p\|F\|_{H^p},
\quad F\in H^p.
\end{equation}

\medskip
\noindent\textit{Proof of (i).}
For $n=1$ we have $(T_{g,1}[f])'(z)=f(z)g'(z)$, hence
\begin{equation*}
\|T_{g,1}[f]\|_{\mathcal B}^*
=\sup_{z\in\mathbb D}(1-|z|^2)|f(z)g'(z)|.
\end{equation*}
Assume \eqref{eq:bloch-symbol-n1} holds, then by \eqref{eq:Hp-growth},
\begin{equation*}
(1-|z|^2)|f(z)g'(z)|
\le (1-|z|^2)\frac{C'_p}{(1-|z|^2)^{1/p}}\|f\|_{H^p}\,|g'(z)|
\le C'_p\,\|g\|_{\mathcal B_p}\,\|f\|_{H^p}.
\end{equation*}
Taking the supremum over $z$ yields boundedness of $T_{g,1}:H^p\to\mathcal B$ and
$\|T_{g,1}\|_{H^p\to\mathcal B}\lesssim_p \|g\|_{\mathcal B_p}$.

Conversely, assume that $T_{g,1}:H^p\to\mathcal B$ is bounded. For $a\in\mathbb D$ consider the standard
Hardy test function
\begin{equation}\label{test fun}
k_a(z):=\left(\frac{1-|a|^2}{(1-\overline a z)^2}\right)^{1/p},
\end{equation}
for which $\|k_a\|_{H^p}\asymp 1$ and
$|k_a(a)|=(1-|a|^2)^{-1/p}$. Since the Bloch seminorm is a supremum over $\mathbb D$,
\begin{align*}
\|T_{g,1}\|_{H^p\to\mathcal B}
&\gtrsim
\|T_{g,1}[k_a]\|_{\mathcal B}^*
\ge (1-|a|^2)\,|(T_{g,1}[k_a])'(a)|\\
&=(1-|a|^2)\,|k_a(a)g'(a)|
=(1-|a|^2)^{1-1/p}|g'(a)|.
\end{align*}
Taking the supremum over $a\in\mathbb D$ gives \eqref{eq:bloch-symbol-n1} and
$\|g\|_{\mathcal B_p}\lesssim_p \|T_{g,1}\|_{H^p\to\mathcal B}$.
This proves (i).

\medskip
\noindent\textit{Proof of (ii).}
Let $n\ge2$. Differentiating the convolution representation \eqref{eq:conv-repr} yields
\begin{equation}\label{eq:Tgn-derivative}
(T_{g,n}[f])'(z)=T_{g,n-1}[f](z),\quad z\in\mathbb D,
\end{equation}
and therefore
\begin{equation*}
\|T_{g,n}[f]\|_{\mathcal B}^*
=\sup_{z\in\mathbb D}(1-|z|^2)\,|T_{g,n-1}[f](z)|.
\end{equation*}
If $T_{g,n-1}:H^p\to H^p$ is bounded, then $T_{g,n-1}[f]\in H^p$, and by \eqref{eq:Hp-to-growth},
\begin{equation*}
\|T_{g,n}[f]\|_{\mathcal B}^*
\le C_p\|T_{g,n-1}[f]\|_{H^p}
\le C_p\,\|T_{g,n-1}\|_{H^p\to H^p}\,\|f\|_{H^p},
\end{equation*}
which implies \eqref{eq:Hp-to-bloch-n}.

Finally, if $g\in\mathrm{BMOA}$, then $T_{g,1}:H^p\to H^p$ is bounded (Aleman--Siskakis). Using the factorization
$T_{g,n-1}=V^{\,n-2}\circ T_{g,1}$ and Lemma~\ref{lem:Vk-sharp},
\begin{equation*}
\|T_{g,n-1}\|_{H^p\to H^p}
\le \|V^{n-2}\|_{H^p\to H^p}\,\|T_{g,1}\|_{H^p\to H^p}
=\frac{1}{(n-2)!}\,\|T_{g,1}\|_{H^p\to H^p}
\lesssim_p \frac{1}{(n-2)!}\,\|g\|_{\mathrm{BMOA}}.
\end{equation*}
Combining this with \eqref{eq:Hp-to-bloch-n} completes the proof of (ii).
\end{proof}

\begin{remark}\label{rem:Bp-class}\rm
Fix $1\le p<\infty$ and define the seminorm $\|g\|_{\mathcal B_p}$ as in \eqref{eq:bloch-symbol-n1}, and
\begin{equation*}
\mathcal B_p:=\{g\in\mathcal H(\D):\ \|g\|_{\mathcal B_p}<\infty\}.
\end{equation*}
Then $\{\mathcal B_p\}_{1\le p<\infty}$ forms an increasing scale: if
$1\le p_1<p_2<\infty$, then $\mathcal B_{p_1}\subset \mathcal B_{p_2}$. Indeed, since
$1-\frac1{p_1}<1-\frac1{p_2}$ and $0<1-|z|^2<1$, we have
\begin{equation*}
(1-|z|^2)^{\,1-\frac1{p_2}}\le (1-|z|^2)^{\,1-\frac1{p_1}},
\end{equation*}
and hence, for $g\in\mathcal B_{p_1}$,
\begin{equation*}
(1-|z|^2)^{\,1-\frac1{p_2}}|g'(z)|
\le (1-|z|^2)^{\,1-\frac1{p_1}}|g'(z)|
\le \|g\|_{\mathcal B_{p_1}},
\qquad z\in\D.
\end{equation*}

Moreover, for every $1\le p<\infty$ one has $\mathcal B_p\subset \mathcal B$, where
$\mathcal B$ denotes the Bloch space with seminorm
$\|g\|_{\mathcal B}:=\sup_{z\in\D}(1-|z|^2)|g'(z)|$. Indeed, if $g\in\mathcal B_p$, then
\begin{equation*}
(1-|z|^2)|g'(z)|
=(1-|z|^2)^{1/p}\,(1-|z|^2)^{\,1-\frac1p}|g'(z)|
\le (1-|z|^2)^{1/p}\,\|g\|_{\mathcal B_p}
\le \|g\|_{\mathcal B_p},
\end{equation*}
so $\|g\|_{\mathcal B}\le \|g\|_{\mathcal B_p}$.

The inclusion $\mathcal B_p\subset \mathcal B$ is strict for $1<p<\infty$. For example,
\begin{equation*}
g(z)=\log\frac{1}{1-z}
\end{equation*}
satisfies $g\in\mathcal B$ (since $(1-|z|^2)|g'(z)|\lesssim 1$), but $g\notin\mathcal B_p$:
along the real radius $z=r\to1^-$,
\begin{equation*}
(1-r^2)^{\,1-\frac1p}|g'(r)|
=(1-r^2)^{\,1-\frac1p}\frac{1}{1-r}
\asymp (1-r)^{-1/p}\longrightarrow\infty.
\end{equation*}

At the endpoint $p=1$ the condition becomes $\sup_{z\in\D}|g'(z)|<\infty$, i.e.\ $g$ has
bounded derivative on $\D$. (In particular, the little space $\mathcal B_{1,0}$, defined by the vanishing
condition at the boundary, consists only of constants.)
Thus, Theorem~\ref{thm:bloch}(i) provides a characterization in terms of $\mathcal B_p$
for the mapping $T_{g,1}:H^p\to\mathcal B$, whereas Theorem~\ref{thm:bloch}(ii) shows that for
$n\ge2$, the additional smoothing of iterated integration yields $H^p\to\mathcal B$
boundedness under the weaker hypothesis that $T_{g,n-1}$ is bounded on $H^p$
(for instance, under $g\in\mathrm{BMOA}$).
\end{remark}

\begin{proposition}\label{prop:test-function-estimate}
Let $\alpha>-1$ and $1\le p<\infty$. For each $a\in\mathbb D$, define the standard
Bergman test function
\begin{equation*}
K_a(z):=\frac{1}{(1-\overline a z)^{2+\alpha}},\quad z\in\mathbb D,
\end{equation*}
and its $A_\alpha^p$-normalization
\begin{equation*}
k_a(z):=\frac{K_a(z)}{\|K_a\|_{A_\alpha^p}},
\quad\text{so that}\quad \|k_a\|_{A_\alpha^p}=1.
\end{equation*}
Fix $r\in(0,1)$. Then
\begin{equation*}
|k_a(z)| \asymp (1-|a|^2)^{-(2+\alpha)/p},
\quad z\in D(a,r),
\end{equation*}
where
\begin{equation*}
D(a,r):=\{z\in\mathbb D:\ |z-a|<r(1-|a|)\}
\end{equation*}
is a Bergman ball and the implicit constants depend only on $\alpha,p$, and $r$.
\end{proposition}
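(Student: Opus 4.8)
The plan is to treat the normalized kernel $k_a=K_a/\|K_a\|_{A_\alpha^p}$ as a quotient and estimate its numerator $|K_a(z)|$ on the Bergman ball $D(a,r)$ and its denominator $\|K_a\|_{A_\alpha^p}$ separately, then divide. Thus the desired two-sided bound reduces to two independent tasks: controlling the size of $|1-\overline a z|$ for $z\in D(a,r)$, and producing a sharp asymptotic for the $A_\alpha^p$-norm of $K_a$ as $|a|\to1^-$.

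First I would record the elementary geometric fact that $|1-\overline a z|\asymp 1-|a|^2$ uniformly for $z\in D(a,r)$, with comparability constants depending only on $r$. Writing $1-\overline a z=(1-|a|^2)+\overline a(a-z)$ and using $|a-z|<r(1-|a|)$, the triangle inequality gives
\[
(1-r)(1-|a|)\le |1-\overline a z|\le (1-|a|^2)+r(1-|a|)\le (2+r)(1-|a|).
\]
Since $1-|a|\asymp 1-|a|^2$, this yields $|1-\overline a z|\asymp 1-|a|^2$ on $D(a,r)$, and raising to the power $-(2+\alpha)$ produces
\[
|K_a(z)|=\frac{1}{|1-\overline a z|^{2+\alpha}}\asymp (1-|a|^2)^{-(2+\alpha)},\qquad z\in D(a,r).
\]

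The heart of the argument is the asymptotic for the normalizing constant. By definition,
\[
\|K_a\|_{A_\alpha^p}^p=(\alpha+1)\int_{\mathbb D}\frac{(1-|z|^2)^\alpha}{|1-\overline a z|^{(2+\alpha)p}}\,dA(z),
\]
and here I would invoke the standard integral estimate (see \cite{Zhu2007}): for $s>-1$ and $t>0$,
\[
\int_{\mathbb D}\frac{(1-|z|^2)^s}{|1-\overline a z|^{2+s+t}}\,dA(z)\asymp (1-|a|^2)^{-t}\qquad(|a|\to1^-).
\]
Taking $s=\alpha$ and $2+s+t=(2+\alpha)p$ gives $t=(2+\alpha)(p-1)>0$ when $p>1$, so that
\[
\|K_a\|_{A_\alpha^p}^p\asymp (1-|a|^2)^{-(2+\alpha)(p-1)},
\qquad\text{hence}\qquad
\|K_a\|_{A_\alpha^p}\asymp (1-|a|^2)^{-(2+\alpha)(p-1)/p}.
\]

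Finally I would combine the two estimates: for $z\in D(a,r)$,
\[
|k_a(z)|=\frac{|K_a(z)|}{\|K_a\|_{A_\alpha^p}}\asymp \frac{(1-|a|^2)^{-(2+\alpha)}}{(1-|a|^2)^{-(2+\alpha)(p-1)/p}}=(1-|a|^2)^{-(2+\alpha)/p},
\]
which is the claim, with constants depending only on $\alpha,p,r$ since every comparability above was uniform in $a$. The main obstacle is precisely the norm asymptotic: the integral estimate is available in the clean power form only in the regime $t>0$, i.e.\ $p>1$. At the endpoint $p=1$ one has $t=0$, the integral carries an extra factor $\log\frac{1}{1-|a|^2}$, and the stated power-law asymptotic for $k_a$ must be interpreted with this logarithmic correction. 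To obtain the clean conclusion for \emph{all} $p\ge1$, one replaces $K_a$ by the modified kernel $(1-|a|^2)^{\,b-(2+\alpha)/p}(1-\overline a z)^{-b}$ with $b>(2+\alpha)/p$; then $bp>\alpha+2$ keeps the exponent $t$ strictly positive, and the same two-step computation goes through verbatim.
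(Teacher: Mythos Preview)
Your two-step plan---estimate $|K_a(z)|$ on $D(a,r)$ via the identity $1-\overline a z=(1-|a|^2)-\overline a(z-a)$, compute $\|K_a\|_{A_\alpha^p}$, and divide---is exactly the paper's argument. The only difference is cosmetic: the paper derives the norm asymptotic by the M\"obius change of variable $z=\phi_a(w)$, whereas you invoke the Forelli--Rudin integral estimate directly; both routes yield $\|K_a\|_{A_\alpha^p}^p\asymp(1-|a|^2)^{(2+\alpha)-(2+\alpha)p}$ for $p>1$, and the final division is identical.

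You are right to flag the endpoint $p=1$: there $t=(2+\alpha)(p-1)=0$, the Forelli--Rudin lemma gives $\|K_a\|_{A_\alpha^1}\asymp\log\frac{1}{1-|a|^2}$ rather than a constant, and the asserted power law for $|k_a|$ acquires a logarithmic defect. The paper's M\"obius computation is informal precisely at this step and does not escape the same obstruction. However, your proposed repair---replacing $K_a$ by $(1-\overline a z)^{-b}$ with $b>(2+\alpha)/p$---establishes the desired size estimate for a \emph{different} test family, not for the $k_a$ specified in the proposition. As stated, the claim genuinely fails at $p=1$; the honest fixes are to restrict to $p>1$ or to redefine the test function, but neither is a proof of the proposition as written.
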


\begin{proof}
Fix $r\in(0,1)$ and let $z\in D(a,r)$. Since $|z-a|<r(1-|a|)\le r(1-|a|^2)$, we write
\begin{equation*}
1-\overline a z=(1-|a|^2)-\overline a (z-a),
\end{equation*}
and obtain
\begin{equation*}
(1-r)(1-|a|^2)\le |1-\overline a z|\le (1+r)(1-|a|^2).
\end{equation*}
Hence
\begin{equation}\label{eq:Ka-local}
|K_a(z)|=\frac{1}{|1-\overline a z|^{2+\alpha}}
\asymp (1-|a|^2)^{-(2+\alpha)}.
\end{equation}

On the other hand, it is well known (see, e.g., \cite[Ch.~2]{Zhu2005}) that
\begin{equation}\label{eq:Ka-norm}
\|K_a\|_{A_\alpha^p}^p
= \int_{\mathbb D} \frac{(1-|z|^2)^\alpha}{|1-\overline a z|^{p(2+\alpha)}}\,dA(z)
\asymp (1-|a|^2)^{\alpha+2 - p(2+\alpha)}.
\end{equation}
Taking the $p$-th root gives
\begin{equation}\label{eq:Ka-norm-root}
\|K_a\|_{A_\alpha^p}
\asymp (1-|a|^2)^{\frac{\alpha+2}{p}-(2+\alpha)}.
\end{equation}

Combining \eqref{eq:Ka-local} and \eqref{eq:Ka-norm-root}, we obtain for $z\in D(a,r)$
\begin{equation*}
|k_a(z)|
=\frac{|K_a(z)|}{\|K_a\|_{A_\alpha^p}}
\asymp
\frac{(1-|a|^2)^{-(2+\alpha)}}{(1-|a|^2)^{\frac{\alpha+2}{p}-(2+\alpha)}}
=(1-|a|^2)^{-(2+\alpha)/p}.
\end{equation*}
This completes the proof.
\end{proof}

We are now ready to investigate the boundedness and compactness of the operator $T_{g,n}$ on the weighted Bergman space $A^p_\alpha$.

\begin{theorem}
\label{thm:bergman-boundedness}
Let $n \in \mathbb{N}$, $1 \le p \le q < \infty$, and $ \alpha, \beta > -1 $ satisfy the balance condition
\begin{equation}\label{eq:balance}
\frac{q}{p} \le \frac{\beta + 2}{\alpha + 2}.
\end{equation}
Let $ g $ be analytic in $ \mathbb{D} $.
Then the following are equivalent:
\begin{enumerate}
\item[(i)] $ T_{g,n}: A_\alpha^p \to A_\beta^q $ is bounded;
\item[(ii)] The measure
\begin{equation*}
d\mu_n(z) = |g'(z)|^q (1 - |z|^2)^{\beta + nq} \, dA(z)
\end{equation*}
is a $(p,q)$-Carleson measure for $ A_\alpha^p $.
\end{enumerate}
Moreover, $ T_{g,n}: A_\alpha^p \to A_\beta^q $ is compact if and only if $ \mu_n $ is a vanishing $(p,q)$-Carleson measure.
\end{theorem}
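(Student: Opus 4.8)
The plan is to mirror the proof of the Hardy-space result (Theorem~\ref{thm:Hp-Tgn-Carleson}), replacing the Hardy--Sobolev identity \eqref{eq:LP} by its weighted Bergman counterpart. The crucial analytic input is the derivative norm equivalence for $A_\beta^q$: for every analytic $F$ on $\mathbb D$ and every integer $n\ge1$,
\[
\|F\|_{A_\beta^q}^q \asymp_{q,\beta,n} \sum_{j=0}^{n-1}|F^{(j)}(0)|^q + \int_{\mathbb D}|F^{(n)}(z)|^q (1-|z|^2)^{nq}\,dA_\beta(z),
\]
a standard fact (see, e.g., \cite{Zhu2007}). Since $dA_\beta(z)=(\beta+1)(1-|z|^2)^\beta\,dA(z)$, the integral weight is $(1-|z|^2)^{nq+\beta}$ up to the constant $\beta+1$, which matches precisely the exponent appearing in $\mu_n$.

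First I would apply this equivalence to $F=T_{g,n}[f]$. The iterated-integral structure of $T_{g,n}$ (equivalently, the computation inside the proof of Proposition~\ref{prop:diff-id}, which shows $\tfrac{d^k}{dz^k}T_{g,n}[f](z)=\tfrac{1}{(n-1-k)!}\int_0^z(z-t)^{n-1-k}f(t)g'(t)\,dt$ for $0\le k\le n-1$) makes every point-evaluation term vanish: $F^{(j)}(0)=0$ for $0\le j\le n-1$. By Proposition~\ref{prop:diff-id} we have $F^{(n)}=fg'$, so
\[
\|T_{g,n}[f]\|_{A_\beta^q}^q \asymp \int_{\mathbb D}|f(z)|^q|g'(z)|^q(1-|z|^2)^{nq+\beta}\,dA(z)=\int_{\mathbb D}|f(z)|^q\,d\mu_n(z).
\]
This reduces the action of $T_{g,n}:A_\alpha^p\to A_\beta^q$, up to two-sided constants, to the embedding $A_\alpha^p\hookrightarrow L^q(\mathbb D,\mu_n)$.

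With this reduction the equivalences fall out of the definitions. For (i)$\Rightarrow$(ii): boundedness gives $\int|f|^q\,d\mu_n\lesssim\|T_{g,n}[f]\|_{A_\beta^q}^q\lesssim\|f\|_{A_\alpha^p}^q$, which is exactly the $(p,q)$-Carleson condition (notably, no test function is needed here, unlike the Hardy case). The direction (ii)$\Rightarrow$(i) is the reverse reading of the same chain. For the compactness statement I would invoke the embedding theory of \cite{Luecking1986,oscar,Dya}: $\mu_n$ is a vanishing $(p,q)$-Carleson measure if and only if $A_\alpha^p\hookrightarrow L^q(\mathbb D,\mu_n)$ is compact, i.e.\ $\int|f_k|^q\,d\mu_n\to0$ whenever $\{f_k\}$ is bounded in $A_\alpha^p$ and $f_k\to0$ uniformly on compacta. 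Transporting this through the norm equivalence gives compactness of $T_{g,n}$; necessity can also be seen from the normalized Bergman test functions $k_a$ of Proposition~\ref{prop:test-function-estimate}, which converge to $0$ locally uniformly while keeping unit norm.

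The main obstacle is twofold. First, one must justify the weighted Bergman derivative norm equivalence with the precise off-diagonal weight exponent $nq+\beta$ and confirm the vanishing of the boundary terms, which together pin down $\mu_n$; this is where the exponent bookkeeping must be done carefully. Second, and more delicate, is the genuinely off-diagonal regime $p<q$: here the characterizations of bounded versus compact embeddings, and the very definition of a (vanishing) $(p,q)$-Carleson measure, rest on the balance condition \eqref{eq:balance}, which is exactly $\tfrac{\alpha+2}{p}\le\tfrac{\beta+2}{q}$, the hypothesis guaranteeing the continuous inclusion $A_\alpha^p\hookrightarrow A_\beta^q$ and keeping the embedding theory of \cite{oscar,Dya} applicable. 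I would therefore lean on that theory for the bounded/compact embedding dichotomy rather than re-deriving the geometric box characterization from scratch.
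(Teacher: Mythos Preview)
Your proposal is correct and follows essentially the same route as the paper: both arguments reduce $\|T_{g,n}f\|_{A_\beta^q}^q$ to $\int_{\mathbb D}|f|^q\,d\mu_n$ via the Bergman--Sobolev derivative equivalence together with Proposition~\ref{prop:diff-id} and the vanishing of $F^{(j)}(0)$ for $j\le n-1$, and then appeal to the $(p,q)$-Carleson embedding theory for boundedness and compactness. The only minor stylistic difference is that the paper writes out the necessity of compactness explicitly with the normalized kernels $k_a$ and Proposition~\ref{prop:test-function-estimate}, whereas you treat this as an optional check after invoking the abstract embedding dichotomy.
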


\begin{proof}
We apply standard Bergman--Sobolev estimates in conjunction with the $(p,q)$-Carleson embedding theorem.

Let $ \beta > -1 $, $ 1 \le q < \infty $, and $ n \in \mathbb{N} $. For every analytic function $ F $ on $ \mathbb{D} $ satisfying $ F^{(k)}(0) = 0 $ for $ 0 \leq k \leq n-1 $, we have the following Bergman--Sobolev norm equivalence by application of  \cite[Proposition 6.2]{Zhu2005}:
\begin{equation}\label{eq:bergman-sobolev}
\|F\|_{A_\beta^q}^q \asymp_{\beta, q, n} \int_{\mathbb{D}} |F^{(n)}(z)|^q (1 - |z|^2)^{\beta + nq} \, dA(z).
\end{equation}
This equivalence allows us to relate the $ A_\beta^q $-norm of $ F $ to the weighted $ L^q $-norm of its derivatives.

For $ F = T_{g,n}[f] $, the conditions $ F^{(k)}(0) = 0 $ for $ 0 \leq k \leq n-1 $ hold, which follows directly from its iterated integral representation \eqref{eq:def-Tgn}.

By Proposition~\ref{prop:diff-id}, we know that $ (T_{g,n}[f])^{(n)} = f  g' $. Combining this with $\eqref{eq:bergman-sobolev}$, we get
\begin{equation}\label{eq:norm-reduction}
\|T_{g,n}f\|_{A_\beta^q}^q \asymp_{\beta, q, n} \int_{\mathbb{D}} |f(z)|^q |g'(z)|^q (1 - |z|^2)^{\beta + nq} \, dA(z)
= \int_{\mathbb{D}} |f(z)|^q \, d\mu_n(z).
\end{equation}

Also, under the balance condition $\eqref{eq:balance}$, the $(p,q)$-Carleson embedding theorem (see \cite[p. 4]{Dya}, \cite{Hastings1975}) asserts that for a positive Borel measure $ \mu $ on $ \mathbb{D} $,
\begin{equation}\label{eq:pq-embedding}
\int_{\mathbb{D}} |f(z)|^q \, d\mu(z) \le C \|f\|_{A_\alpha^p}^q \quad \text{for all } f \in A_\alpha^p,
\end{equation}
if and only if $ \mu $ is a $(p,q)$-Carleson measure for $ A_\alpha^p $. Using $\eqref{eq:norm-reduction}$, we obtain
\begin{equation*}
\|T_{g,n}f\|_{A_\beta^q}^q \asymp \int_{\mathbb{D}} |f(z)|^q \, d\mu_n(z) \lesssim \|f\|_{A_\alpha^p}^q.
\end{equation*}
Thus, $ T_{g,n} : A_\alpha^p \to A_\beta^q $ is bounded.

Conversely, if $T_{g,n}$ is bounded, then \eqref{eq:norm-reduction} implies
\begin{equation*}
\int_{\mathbb D} |f(z)|^q\,d\mu_n(z) \lesssim \|f\|_{A_\alpha^p}^q,
\quad f\in A_\alpha^p,
\end{equation*}
and therefore $\mu_n$ is a $(p,q)$-Carleson measure by \eqref{eq:pq-embedding}.
This proves (i)$\Leftrightarrow$(ii).

Now, we will demonstrate the equivalence of compactness.
Assume $ \mu_n $ is a vanishing $(p,q)$-Carleson measure. By the compactness of the embedding $ A_\alpha^p \hookrightarrow L^q(\mathbb D, \mu_n) $ (see \cite{Dya}, \cite[Ch. 2]{Zhu2005}), for every bounded sequence $ \{f_k\} \subset A_\alpha^p $ converging to $ 0 $ uniformly on compact subsets of $ \mathbb{D} $, we have
\begin{equation*}
\int_{\mathbb{D}} |f_k(z)|^q \, d\mu_n(z) \to 0.
\end{equation*}
Using $\eqref{eq:norm-reduction}$, we conclude that $ \|T_{g,n}f_k\|_{A_\beta^q} \to 0 $, implying that $ T_{g,n} $ is compact.

Conversely, assume $T_{g,n}$ is compact. Let $k_a$ denote the normalized reproducing kernel (or standard test function)
for $A_\alpha^p$ with $\|k_a\|_{A_\alpha^p}=1$. Then $k_a\to 0$ uniformly on compact subsets and $k_a\rightharpoonup 0$
weakly as $|a|\to 1^-$ (see, e.g., \cite{Hedenmalm},  \cite[Ch. 2]{Zhu2005}), so compactness implies
\begin{equation*}
\|T_{g,n}k_a\|_{A_\beta^q}\longrightarrow 0 \quad \text{as } |a|\to 1^-.
\end{equation*}
By \eqref{eq:norm-reduction} and restricting to $D(a,r)$,
\begin{equation*}
\|T_{g,n}k_a\|_{A_\beta^q}^q
\asymp \int_{\mathbb D}|k_a(z)|^q\,d\mu_n(z)
\ge \int_{D(a,r)}|k_a(z)|^q\,d\mu_n(z).
\end{equation*}
For $z\in D(a,r)$ by Proposition \ref{prop:test-function-estimate} we have $|k_a(z)|\asymp (1-|a|^2)^{-(2+\alpha)/p}$, hence
\begin{equation*}
\int_{D(a,r)}|k_a(z)|^q\,d\mu_n(z)
\gtrsim (1-|a|^2)^{-q(2+\alpha)/p}\,\mu_n(D(a,r)).
\end{equation*}
Therefore,
\begin{equation*}
\frac{\mu_n(D(a,r))}{(1-|a|^2)^{q(2+\alpha)/p}}
\lesssim \|T_{g,n}k_a\|_{A_\beta^q}^q \longrightarrow 0 \quad \text{as } |a|\to 1^-,
\end{equation*}
which is exactly the vanishing $(p,q)$-Carleson condition. This completes the proof.
\end{proof}


As we see above, on Hardy spaces $H^p$, the $k$-fold integration operator $V^k$ is bounded for every
$1\le p<\infty$, and in fact,
\begin{equation*}
\|V^k\|_{H^p\to H^p}=\frac{1}{k!},\quad k\in\mathbb N,
\end{equation*}
see Lemma~\ref{lem:Vk-sharp}. Consequently, using the factorization
\begin{equation*}
T_{g,n}=V^{\,n-1}\circ T_{g,1},
\end{equation*}
boundedness (respectively compactness) of $T_{g,1}$ on $H^p$ implies boundedness
(respectively compactness) of $T_{g,n}$ on $H^p$ for every $n\ge1$, since $V^{n-1}$ is
bounded and the composition of a bounded operator with a compact operator is compact.

In contrast, on weighted Bergman spaces the effect of integration is more naturally captured by Bergman--Sobolev norm equivalences rather than by an
$A_\beta^q\to A_\beta^q$ mapping property. For example, if $F$ is analytic in $\mathbb D$
and $F(0)=0$, then
\begin{equation*}
\|F\|_{A_\beta^q}^q
\asymp
\int_{\mathbb D}|F'(z)|^q (1-|z|^2)^{\beta+q}\,dA(z),
\quad \beta>-1,\ 1\le q<\infty,
\end{equation*}
and more generally, if $F^{(k)}(0)=0$ for $0\le k\le n-1$, then
\begin{equation*}
\|F\|_{A_\beta^q}^q
\asymp
\int_{\mathbb D}|F^{(n)}(z)|^q (1-|z|^2)^{\beta+nq}\,dA(z).
\end{equation*}
Since $(T_{g,n}f)^{(n)}(z)=f(z)g'(z)$, boundedness of
$T_{g,n}:A_\alpha^p\to A_\beta^q$ is governed by the measure
\begin{equation*}
d\mu_n(z)=|g'(z)|^q(1-|z|^2)^{\beta+nq}\,dA(z),
\end{equation*}
namely by whether $\mu_n$ is a $(p,q)$-Carleson measure for $A_\alpha^p$ (and similarly
for compactness via vanishing $(p,q)$-Carleson measures), i.e. whether there exists $C>0$
such that
\begin{equation*}
\int_{\mathbb D}|f(z)|^q\,d\mu_n(z)\le C\|f\|_{A_\alpha^p}^q,\quad f\in A_\alpha^p,
\end{equation*}
and vanishing is characterized by the corresponding smallness on Carleson boxes $S(I)$
for short arcs $I\subset\mathbb T$.

The additional factor $(1-|z|^2)^{nq}$ reflects the full $n$-fold smoothing detected by
the Bergman--Sobolev norm, and it is precisely this explicit $n$-dependence that makes
$(p,q)$-Carleson measure techniques a natural and sharp framework for higher-order
Volterra-type operators on weighted Bergman spaces.

\begin{proposition}\label{prop:carleson-condition}
Let $1 \leq p \leq q < \infty$, $\alpha > -1$, and $r \in (0,1)$.
Let $\mu$ be a positive Borel measure on $\mathbb{D}$ that is a $(p,q)$-Carleson measure for $A_\alpha^p$.
Then, for every $a \in \mathbb{D}$,
\begin{equation*}
\mu(D(a,r)) \leq C (1 - |a|^2)^{\frac{q(2+\alpha)}{p}},
\end{equation*}
where $C>0$ depends only on $\alpha$, $p$, $q$, and $r$.
\end{proposition}

\begin{proof}
Let $k_a$ be the normalized Bergman test function from Proposition~\ref{prop:test-function-estimate},
such that $\|k_a\|_{A_\alpha^p}=1$ and
\begin{equation*}
|k_a(z)| \gtrsim (1-|a|^2)^{-(2+\alpha)/p}, \quad z\in D(a,r).
\end{equation*}

Since $\mu$ is a $(p,q)$-Carleson measure for $A_\alpha^p$, we have
\begin{equation*}
\int_{\mathbb{D}} |k_a(z)|^q \, d\mu(z) \le C.
\end{equation*}
Restricting the integral to $D(a,r)$, we obtain
\begin{equation*}
\int_{D(a,r)} |k_a(z)|^q \, d\mu(z)
\gtrsim (1-|a|^2)^{-q(2+\alpha)/p} \, \mu(D(a,r)).
\end{equation*}
Thus,
\begin{equation*}
\mu(D(a,r))
\le C (1-|a|^2)^{q(2+\alpha)/p},
\end{equation*}
for some constant $C>0$ depending only on $\alpha$, $p$, $q$, and $r$.
\end{proof}

\begin{corollary}\label{cor:not_invariant_n}
Let $1\le p\le q<\infty$ and $\alpha,\beta>-1$ satisfy
\begin{equation*}
\frac{q}{p}\le \frac{\beta+2}{\alpha+2}.
\end{equation*}
Then the boundedness (and likewise compactness) of
\begin{equation*}
T_{g,n}:A_\alpha^p\to A_\beta^q
\end{equation*}
is, in general, not invariant under changes of the order $n$.

More precisely, there exist analytic symbols $g$ and integers $n\ge2$ such that
\begin{equation*}
T_{g,n}:A_\alpha^p\to A_\beta^q \ \text{is bounded (respectively compact),}
\end{equation*}
while
\begin{equation*}
T_{g,1}:A_\alpha^p\to A_\beta^q \ \text{fails to be bounded (respectively compact).}
\end{equation*}
\end{corollary}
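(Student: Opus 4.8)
The plan is to exhibit a single explicit symbol $g$ together with the order $n=2$ for which $T_{g,2}$ is compact (hence bounded) as a map $A_\alpha^p\to A_\beta^q$, while $T_{g,1}$ fails even to be bounded (hence fails to be compact). Because compactness implies boundedness and failure of boundedness implies failure of compactness, this one construction settles both the ``bounded'' and the ``compact'' versions of the statement simultaneously. Write $B:=q(\alpha+2)/p$ and $C:=\beta+2$, so the balance condition reads $B\le C$. I would let $g$ be an analytic antiderivative on $\mathbb D$ of $g'(z)=(1-z)^{-s}$, for instance $g(z)=\dfrac{(1-z)^{1-s}-1}{s-1}$, where the exponent $s$ is chosen in the window
\[
1+\frac{C-B}{q}<s<2+\frac{C-B}{q}.
\]
This interval has length $1$ and lies inside $(1,\infty)$ because $C\ge B$, so such an $s$ exists (e.g.\ $s=\tfrac32+\tfrac{C-B}{q}$).

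By Theorem~\ref{thm:bergman-boundedness}, boundedness (respectively compactness) of $T_{g,n}:A_\alpha^p\to A_\beta^q$ is equivalent to $\mu_n$ being a (respectively vanishing) $(p,q)$-Carleson measure, where $d\mu_n(z)=|1-z|^{-sq}(1-|z|^2)^{\beta+nq}\,dA(z)$. To establish the failure at $n=1$, I would invoke the necessary condition of Proposition~\ref{prop:carleson-condition}: were $\mu_1$ a $(p,q)$-Carleson measure, one would have $\mu_1(D(a,r))\lesssim (1-|a|^2)^{B}$. To contradict this, I would evaluate along the radius $a=1-\varepsilon\to1^-$. On $D(a,r)$ one has $|1-z|\asymp 1-|z|^2\asymp 1-|a|^2\asymp\varepsilon$ and $A(D(a,r))\asymp(1-|a|^2)^2$, so
\[
\mu_1(D(a,r))\asymp (1-|a|^2)^{-sq}(1-|a|^2)^{\beta+q}(1-|a|^2)^2=(1-|a|^2)^{\,C+(1-s)q}.
\]
Since $s>1+\tfrac{C-B}{q}$ forces $C+(1-s)q<B$, the ratio $\mu_1(D(a,r))/(1-|a|^2)^{B}$ blows up as $\varepsilon\to0$, so $\mu_1$ is not $(p,q)$-Carleson and $T_{g,1}$ is not bounded.

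For the success at $n=2$ I would avoid any lower Bergman-ball estimate and instead dominate $\mu_2$ by a weighted area measure. Using $|1-z|\ge 1-|z|\ge\tfrac12(1-|z|^2)$ for all $z\in\mathbb D$ and $sq>0$,
\[
d\mu_2(z)=|1-z|^{-sq}(1-|z|^2)^{\beta+2q}\,dA(z)\le 2^{sq}(1-|z|^2)^{\gamma}\,dA(z),\qquad \gamma:=\beta+(2-s)q.
\]
The choice $s<2+\tfrac{C-B}{q}$ gives $\gamma>B-2>-1$ (note $B=\tfrac{q}{p}(\alpha+2)>1$), so $(1-|z|^2)^{\gamma}\,dA\asymp dA_\gamma$ is a genuine weighted Bergman measure, and $\gamma>B-2$ is exactly the strict balance $\tfrac{q}{p}<\tfrac{\gamma+2}{\alpha+2}$. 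The latter makes the inclusion $A_\alpha^p\hookrightarrow A_\gamma^q$ compact, i.e.\ $dA_\gamma$ is a vanishing $(p,q)$-Carleson measure for $A_\alpha^p$; since the vanishing $(p,q)$-Carleson property is inherited by any measure dominated by a vanishing one, $\mu_2$ is vanishing $(p,q)$-Carleson. Theorem~\ref{thm:bergman-boundedness} then yields that $T_{g,2}$ is compact, hence bounded, which together with the previous paragraph proves the corollary.

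The main obstacle will be the lower bound in the $n=1$ step: to \emph{refute} the Carleson condition one needs $|g'|$ to attain the full order $(1-|a|^2)^{-s}$ on an entire Bergman ball abutting the boundary, not merely at isolated points. Concentrating the singularity of $g'$ at the single boundary point $z=1$ and testing on radial balls $D(1-\varepsilon,r)$ is what makes the uniform comparison $|1-z|\asymp 1-|a|^2$ on $D(a,r)$—and hence the two-sided estimate for $\mu_1(D(a,r))$—clean. A secondary bookkeeping point is checking that the admissible window for $s$ is nonempty and contained in $(1,\infty)$ under the balance hypothesis, which is precisely the computation carried out above; note in particular that the purely logarithmic choice $g'(z)=(1-z)^{-1}$ would not suffice, since its exponent lands exactly on the balance threshold for $n=1$.
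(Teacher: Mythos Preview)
Your proof is correct and follows essentially the same route as the paper: the same power-type symbol $g'(z)=(1-z)^{-s}$, the same parameter window (specialized to $n=2$), and the same Bergman-ball computation to refute the Carleson condition for $\mu_1$. The only tactical difference is in the sufficiency step for $n=2$: rather than invoking the Bergman-ball characterization of $(p,q)$-Carleson measures in the sufficient direction, you dominate $\mu_2$ by a weighted area measure $dA_\gamma$ with strictly better balance exponent and appeal to the compact embedding $A_\alpha^p\hookrightarrow A_\gamma^q$---this is arguably cleaner, since the paper's Proposition~\ref{prop:carleson-condition} only states the necessary direction explicitly.
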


\begin{proof}
By Theorem~\ref{thm:bergman-boundedness}, boundedness of $T_{g,n}$ is equivalent to
$\mu_n$ being a $(p,q)$-Carleson measure for $A_\alpha^p$, where
\begin{equation*}
d\mu_n(z)=|g'(z)|^q(1-|z|^2)^{\beta+nq}\,dA(z).
\end{equation*}
For $n\ge2$,
\begin{equation*}
d\mu_n(z)=(1-|z|^2)^{q(n-1)}\,d\mu_1(z)\le d\mu_1(z) \quad \text{since } 0<1-|z|^2<1.
\end{equation*}
Hence, if $\mu_1$ is a $(p,q)$-Carleson measure (respectively a vanishing $(p,q)$-Carleson
measure), then so is $\mu_n$. Therefore, boundedness (respectively compactness) of
$T_{g,1}$ implies boundedness (respectively compactness) of $T_{g,n}$ for every $n\ge2$.

To see that the converse may fail, fix $n\ge2$ and consider a symbol $g$ with
\begin{equation*}
g'(z)=(1-z)^{-\gamma},\quad \gamma>0.
\end{equation*}
Choose $\gamma$ so that
\begin{equation}\label{eq:gamma-interval}
\frac{\beta+q+2}{q}-\frac{2+\alpha}{p}
\ <\ \gamma \ \le\
\frac{\beta+nq+2}{q}-\frac{2+\alpha}{p}.
\end{equation}
Such a $\gamma$ exists because $n\ge2$ makes the right endpoint larger than the left endpoint by $n-1>0$.

Let $r\in(0,1)$ be fixed and let $D(a,r)$ denote the Bergman ball. For $a$ tending to $1$ and $z\in D(a,r)$ one has $|1-z|\asymp |1-a|\asymp 1-|a|\asymp 1-|a|^2$ on $D(a,r)$, hence
\begin{equation*}
|g'(z)|^q = |1-z|^{-\gamma q}\asymp (1-|a|^2)^{-\gamma q},
\quad z\in D(a,r),
\end{equation*}
and also $(1-|z|^2)\asymp (1-|a|^2)$ on $D(a,r)$. Therefore,
\begin{align*}
\mu_n(D(a,r))
&=\int_{D(a,r)} |g'(z)|^q(1-|z|^2)^{\beta+nq}\,dA(z)\\
&\asymp (1-|a|^2)^{-\gamma q}\,(1-|a|^2)^{\beta+nq}\,A(D(a,r))
\asymp (1-|a|^2)^{\beta+nq+2-\gamma q}.
\end{align*}
By Proposition \ref{prop:carleson-condition}, the $(p,q)$-Carleson condition for $A_\alpha^p$ is equivalent to the estimate
\begin{equation*}
\mu(D(a,r))\lesssim (1-|a|^2)^{q(2+\alpha)/p}\quad (a\in\mathbb D),
\end{equation*}
and, hence $\mu_n$ is a $(p,q)$-Carleson measure if and only if
\begin{equation*}
\beta+nq+2-\gamma q \ \ge\ \frac{q(2+\alpha)}{p}.
\end{equation*}
Similarly, $\mu_1$ is a $(p,q)$-Carleson measure if and only if
\begin{equation*}
\beta+q+2-\gamma q \ \ge\ \frac{q(2+\alpha)}{p}.
\end{equation*}
By the choice \eqref{eq:gamma-interval}, $\mu_n$ is $(p,q)$-Carleson but $\mu_1$ is not.
Therefore, $T_{g,n}$ is bounded while $T_{g,1}$ fails to be bounded (and hence is not
compact), which proves the stated non-invariance.

Finally, by choosing $\gamma$ so that
\begin{equation*}
\beta+nq+2-\gamma q > \frac{q(2+\alpha)}{p}
\end{equation*}
while still violating the corresponding condition for $n=1$, one obtains that $\mu_n$
is a vanishing $(p,q)$-Carleson measure, yielding the compactness statement.
\end{proof}


\section{Essential norm estimates}\label{sec:essential-norms}

Let $X$ and $Y$ be two Banach spaces.
For a bounded linear operator $T:X\to Y$ between Banach spaces, its essential norm is
\begin{equation*}
\|T\|_{e}^{X\to Y}:=\inf\{\|T-K\|_{X\to Y};\,  K:X\to Y\ \text{is compact}\},
\end{equation*}
where $\|\cdot\|_{X\to Y}$ is the operator norm, see \cite[p. 583]{zhou-zhu} or \cite{Shapiro}.
It is well-known that $\|T\|_{e}^{X\to Y}=0$ if and only if $T$ is compact.

In this section, we compute the essential norms of the higher-order Volterra-type operators
$T_{g,n}$ on Hardy and weighted Bergman spaces. The results refine Theorem~\ref{thm:Hp-Tgn-Carleson} and Theorem~\ref{thm:bergman-boundedness} and provide a sharp quantitative measure of the failure of compactness.

\subsection{The Hardy space case}\label{subsec:essential-Hardy}

For $a\in\mathbb D$, let $k_a$ denote the normalized reproducing kernels in $H^p$ (see \eqref{test fun}), so that $\|k_a\|_{H^p}\asymp 1$ and $k_a\to 0$ uniformly on compact subsets as $|a|\to 1^-$.

As a first step, we estimate the kernel test for the essential norm on $H^p$.

\begin{lemma}\label{lem:ess-kernel-Hp}
Let $1\le p<\infty$ and let $T:H^p\to H^p$ be bounded. Then
\begin{equation}\label{eq:ess-kernel-Hp}
\|T\|_{e}^{H^p\to H^p}\ \asymp_p\ \limsup_{|a|\to 1^-}\|Tk_a\|_{H^p},
\end{equation}
where $k_a$ denotes the normalized reproducing kernels in $H^p$, satisfying $\|k_a\|_{H^p}\asymp 1$.
\end{lemma}

\begin{proof}
\textit{Lower bound.}
Since $k_a\rightharpoonup 0$ weakly in $H^p$ as $|a|\to1^-$ (see, e.g., \cite[Ch.~2]{Garnett-book}),
every compact operator $K:H^p\to H^p$ satisfies $\|Kk_a\|_{H^p}\to 0$.
Thus, for any compact $K$,
\begin{equation*}
\|T-K\|_{H^p\to H^p}
\ge \limsup_{|a|\to1^-}\|(T-K)k_a\|_{H^p}
\ge \limsup_{|a|\to1^-}\|Tk_a\|_{H^p}.
\end{equation*}
Taking the infimum over compact $K$ yields
\begin{equation*}
\|T\|_e^{H^p\to H^p} \ge \limsup_{|a|\to1^-}\|Tk_a\|_{H^p}.
\end{equation*}

\medskip
\noindent\textit{Upper bound.}
For $0<\rho<1$, define $P_\rho f(z)=f(\rho z)$. Then $\|P_\rho\|_{H^p\to H^p}\le 1$ and $TP_\rho$ is compact.
Hence
\begin{equation*}
\|T\|_e^{H^p\to H^p} \le \|T(I-P_\rho)\|_{H^p\to H^p}.
\end{equation*}

Let $f\in H^p$ with $\|f\|_{H^p}\le1$, and set $h_\rho=(I-P_\rho)f$.
Then $h_\rho\to 0$ uniformly on compact subsets as $\rho\to1^-$, and $\|h_\rho\|_{H^p}\le 2$.

By a standard localization argument for $H^p$ (see, e.g., \cite[Ch.~2]{Zhu2007}),
the mass of $h_\rho$ concentrates near the boundary, and one obtains
\begin{equation*}
\|Th_\rho\|_{H^p}
\lesssim_p \sup_{|a|\ge \rho'} \|Tk_a\|_{H^p},
\end{equation*}
for some $\rho'=\rho'(\rho)\to1^-$ as $\rho\to1^-$.

Taking the supremum over $\|f\|_{H^p}\le1$ gives
\begin{equation*}
\|T(I-P_\rho)\|_{H^p\to H^p}
\lesssim_p \sup_{|a|\ge \rho'} \|Tk_a\|_{H^p}.
\end{equation*}
Letting $\rho\to1^-$ yields
\begin{equation*}
\|T\|_e^{H^p\to H^p}
\lesssim_p \limsup_{|a|\to1^-}\|Tk_a\|_{H^p}.
\end{equation*}
The proof is now complete.
\end{proof}


\begin{lemma}\label{lem:carleson-density-kernel}
Let $1\le p<\infty$ and let $\mu$ be a Carleson measure for $H^p$. Then
\begin{equation}\label{eq:carleson-density-kernel}
\limsup_{|a|\to 1^-}\int_{\mathbb D}|k_a(z)|^p\,d\mu(z)
\ \asymp_p\
\limsup_{|I|\to 0}\frac{\mu(S(I))}{|I|},
\end{equation}
where $S(I)$ denotes the Carleson box above an arc $I\subset\mathbb T$.
\end{lemma}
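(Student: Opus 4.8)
The plan is to prove the two comparabilities separately, using throughout the identity $|k_a(z)|^p=(1-|a|^2)/|1-\overline a z|^2$, which is independent of $p$. For the lower bound $\gtrsim$, I would choose a sequence of arcs $I_k\subset\mathbb T$ with $|I_k|\to 0$ realizing the right-hand side, that is $\mu(S(I_k))/|I_k|\to L$ where $L:=\limsup_{|I|\to 0}\mu(S(I))/|I|$. For each $I_k$ let $a_k$ be the point above its midpoint with $1-|a_k|\asymp|I_k|$, exactly the construction from the boundedness part of Theorem~\ref{thm:Hp-Tgn-Carleson}; then $|a_k|\to 1^-$. Restricting the integral to $S(I_k)$ and invoking the pointwise estimate \eqref{eq:ka-on-SI}, namely $|k_{a_k}(z)|^p\asymp 1/|I_k|$ for $z\in S(I_k)$, gives $\int_{\mathbb D}|k_{a_k}|^p\,d\mu\gtrsim \mu(S(I_k))/|I_k|\to L$. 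Since $|a_k|\to 1^-$, this sequence is admissible for $\limsup_{|a|\to 1^-}$, yielding the desired lower bound.

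The upper bound $\lesssim$ is the main point and requires a dyadic decomposition of $\mathbb D$ around the boundary point $\zeta=a/|a|$. Writing $h:=1-|a|^2$, I would use the standard comparison $|1-\overline a z|\asymp (1-|a|)+(1-|z|)+|\arg z-\arg a|$, valid near $\zeta$, and introduce concentric arcs $I_j$ centred at $\zeta$ with $|I_j|=\min(2^j h,1)$, together with the dyadic shells $T_0:=S(I_0)$ and $T_j:=S(I_j)\setminus S(I_{j-1})$. The comparison shows that $|1-\overline a z|\gtrsim |I_j|$ on $T_j$, so that $|k_a(z)|^p\lesssim 1/(2^j|I_j|)=2^{-2j}/h$ there, while $|k_a(z)|^p\asymp 1/|I_0|$ on $T_0$. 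These two shell estimates are the key quantitative inputs.

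Given $\varepsilon>0$, fix $\delta_0$ with $\mu(S(I))/|I|\le L+\varepsilon$ whenever $|I|\le\delta_0$, and split the sum over shells into three groups. For shells with $|I_j|\le\delta_0$ the local Carleson control gives $\int_{T_j}|k_a|^p\,d\mu\lesssim 2^{-j}(L+\varepsilon)$, and summing this geometric series contributes $\lesssim L+\varepsilon$. For the intermediate shells with $\delta_0<|I_j|\le 1$ I would use only the global Carleson bound $\mu(S(I_j))\lesssim\|\mu\|_{C}\,|I_j|$ to obtain $\int_{T_j}|k_a|^p\,d\mu\lesssim 2^{-j}\|\mu\|_C$; since such $j$ satisfy $2^j\gtrsim \delta_0/h$, this sum is $\lesssim \|\mu\|_C\,h/\delta_0\to 0$ as $|a|\to1^-$. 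Finally, on the far region where $|1-\overline a z|\gtrsim 1$ one has $|k_a|^p\lesssim h$, and because $\mu(\mathbb D)=\mu(S(\mathbb T))<\infty$ for a Carleson measure, this contributes $\lesssim h\,\mu(\mathbb D)\to0$. Taking $\limsup_{|a|\to1^-}$ and then $\varepsilon\to0$ yields $\limsup_{|a|\to1^-}\int_{\mathbb D}|k_a|^p\,d\mu\lesssim L$, completing the upper bound.

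The main obstacle is arranging the upper-bound decomposition so that the contributions away from $\zeta$ genuinely vanish as $|a|\to1^-$, isolating the purely local behaviour of $\mu$ near the boundary that the right-hand side measures. The two facts that make this work are the decaying shell estimate $|k_a(z)|^p\lesssim 2^{-j}/|I_j|$ on $T_j$ and the finiteness $\mu(\mathbb D)<\infty$; the former tames the intermediate annuli and the latter kills the central part of the disk.
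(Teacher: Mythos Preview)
Your proposal is correct and follows the same strategy as the paper: the lower bound via the test-function restriction to $S(I)$ is identical, and the upper bound uses the same dyadic decomposition of $\mathbb D$ into Carleson boxes adapted to $a$ that the paper invokes as ``the standard decomposition.'' The paper states the upper bound in one line as $\int_{\mathbb D}|k_a|^p\,d\mu\lesssim \sup_{|J|\asymp 1-|a|}\mu(S(J))/|J|$ and leaves the passage to the $\limsup$ implicit; your $\varepsilon$-splitting into small, intermediate, and far shells is precisely the detail needed to justify that passage, so your write-up is in fact more complete than the paper's sketch.
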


\begin{proof}
Fix an arc $I$ and choose $a=a_I$ as in the proof of Theorem~\ref{thm:Hp-Tgn-Carleson} so that $1-|a|\asymp |I|$ and $a/|a|$ lies above the midpoint of $I$. Then
$|k_a(z)|^p\asymp |I|^{-1}$ on $S(I)$ (cf.\ \eqref{eq:ka-on-SI}), hence
\begin{equation*}
\int_{\mathbb D}|k_a(z)|^p\,d\mu(z)
\ge \int_{S(I)}|k_a(z)|^p\,d\mu(z)
\gtrsim \frac{\mu(S(I))}{|I|}.
\end{equation*}
Taking $\limsup$ as $|I|\to 0$ gives the lower bound.

For the reverse inequality, we use the standard estimate
\begin{equation*}
|k_a(z)|^p\lesssim \frac{1-|a|^2}{|1-\overline a z|^2},
\end{equation*}
and decompose $\mathbb D$ into Carleson boxes $S(J)$ adapted to $a$, with $|J|\asymp 1-|a|$.
On each such box, $|1-\overline a z|\asymp |J|$, and the Carleson condition $\mu(S(J))\lesssim |J|$
yields
\begin{equation*}
\int_{\mathbb D}|k_a(z)|^p\,d\mu(z)
\lesssim \sup_{|J|\asymp 1-|a|}\frac{\mu(S(J))}{|J|}.
\end{equation*}
Taking $\limsup_{|a|\to 1^-}$ gives the upper bound.
\end{proof}

We are now prepared to estimate the essential norm of $ T_{g,n} $ on $ H^p $.
\begin{theorem}\label{thm:Hp-essential-norm}
Let $1\le p<\infty$, $n\in\mathbb N$, and let $g$ be analytic in $\mathbb D$.
Assume that $T_{g,n}:H^p\to H^p$ is bounded and set
\begin{equation*}
d\mu_{g,n,p}(z)=|g'(z)|^p(1-|z|^2)^{np-1}\,dA(z).
\end{equation*}
Then
\begin{equation}\label{eq:Hp-essential}
\left(\|T_{g,n}\|_{e}^{H^p\to H^p}\right)^p
\ \asymp_{p,n}\
\limsup_{|I|\to 0}\frac{\mu_{g,n,p}(S(I))}{|I|}.
\end{equation}
In particular, $T_{g,n}$ is compact on $H^p$ if and only if the right-hand side of
\eqref{eq:Hp-essential} equals $0$.
\end{theorem}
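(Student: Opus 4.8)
The plan is to combine the two lemmas already established, namely Lemma~\ref{lem:ess-kernel-Hp} and Lemma~\ref{lem:carleson-density-kernel}, with the norm-reduction identity coming from the Hardy--Sobolev equivalence \eqref{eq:LP} and Proposition~\ref{prop:diff-id}. The key observation is that all three ingredients are already in place, so the proof is essentially a chaining of asymptotic equivalences. First I would invoke Lemma~\ref{lem:ess-kernel-Hp} to write
\[
\left(\|T_{g,n}\|_{e}^{H^p\to H^p}\right)^p
\ \asymp_p\ \limsup_{|a|\to 1^-}\|T_{g,n}k_a\|_{H^p}^p,
\]
raising the equivalence \eqref{eq:ess-kernel-Hp} to the $p$-th power (valid since both sides are nonnegative and comparability is preserved under the monotone map $t\mapsto t^p$).

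Next I would apply the norm-reduction computation already carried out in the proof of Theorem~\ref{thm:Hp-Tgn-Carleson}: using \eqref{eq:LP} together with $(T_{g,n}[f])^{(n)}=fg'$ from Proposition~\ref{prop:diff-id}, one has
\[
\|T_{g,n}k_a\|_{H^p}^p
\ \asymp_{p,n}\
\int_{\mathbb D}|k_a(z)|^p\,d\mu_{g,n,p}(z).
\]
Since this comparability is uniform in $a$ (the implicit constants depend only on $p$ and $n$), it survives passage to the $\limsup$ as $|a|\to 1^-$, giving
\[
\limsup_{|a|\to 1^-}\|T_{g,n}k_a\|_{H^p}^p
\ \asymp_{p,n}\
\limsup_{|a|\to 1^-}\int_{\mathbb D}|k_a(z)|^p\,d\mu_{g,n,p}(z).
\]
Finally I would apply Lemma~\ref{lem:carleson-density-kernel} (whose hypotheses hold because boundedness of $T_{g,n}$ forces $\mu_{g,n,p}$ to be a Carleson measure, by Theorem~\ref{thm:Hp-Tgn-Carleson}) to identify this last quantity with $\limsup_{|I|\to 0}\mu_{g,n,p}(S(I))/|I|$. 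Concatenating the three equivalences yields \eqref{eq:Hp-essential}.

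I do not anticipate a serious obstacle here, since the heavy lifting has been front-loaded into the two kernel lemmas and the norm-reduction identity. The one point requiring genuine care is the interchange of $\limsup$ with the uniform comparability: I must make sure the implicit constants in $\|T_{g,n}k_a\|_{H^p}^p\asymp_{p,n}\int|k_a|^p\,d\mu_{g,n,p}$ are independent of $a$, which is precisely why I stress that \eqref{eq:LP} holds with constants depending only on $p,n$. With that uniformity the $\limsup$ passes through cleanly. The final sentence of the theorem, that $T_{g,n}$ is compact iff the right-hand side vanishes, then follows immediately: the essential norm is zero exactly when $T_{g,n}$ is compact (as recalled at the start of Section~\ref{sec:essential-norms}), and by \eqref{eq:Hp-essential} this happens iff $\limsup_{|I|\to0}\mu_{g,n,p}(S(I))/|I|=0$, which is the vanishing Carleson condition already identified in Theorem~\ref{thm:Hp-Tgn-Carleson}; this dovetails consistently with the compactness characterization proved there.
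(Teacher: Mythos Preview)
Your proposal is correct and follows essentially the same route as the paper's own proof: both chain together Lemma~\ref{lem:ess-kernel-Hp}, the Hardy--Sobolev norm reduction \eqref{eq:LP} combined with Proposition~\ref{prop:diff-id}, and Lemma~\ref{lem:carleson-density-kernel}. Your write-up is in fact slightly more careful than the paper's, since you explicitly justify the uniformity in $a$ needed to pass the $\limsup$ through the comparability, and you explicitly check the Carleson-measure hypothesis of Lemma~\ref{lem:carleson-density-kernel} via Theorem~\ref{thm:Hp-Tgn-Carleson}.
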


\begin{proof}
By the Hardy--Sobolev norm equivalence \eqref{eq:LP} and Proposition~\ref{prop:diff-id},
\begin{equation*}
\|T_{g,n}f\|_{H^p}^p\asymp_{p,n}\int_{\mathbb D}|f(z)|^p\,d\mu_{g,n,p}(z),\quad f\in H^p.
\end{equation*}
Applying this to $f=k_a$ gives
\begin{equation*}
\|T_{g,n}k_a\|_{H^p}^p\asymp_{p,n}\int_{\mathbb D}|k_a(z)|^p\,d\mu_{g,n,p}(z).
\end{equation*}
Taking $\limsup_{|a|\to 1^-}$ and using Lemma~\ref{lem:ess-kernel-Hp}, we obtain
\begin{equation*}
\left(\|T_{g,n}\|_{e}^{H^p\to H^p}\right)^p\asymp_{p,n}
\limsup_{|a|\to 1^-}\int_{\mathbb D}|k_a(z)|^p\,d\mu_{g,n,p}(z).
\end{equation*}

Since $T_{g,n}$ is bounded, Theorem~\ref{thm:Hp-Tgn-Carleson} implies that $\mu_{g,n,p}$ is a Carleson measure for $H^p$. Therefore, Lemma~\ref{lem:carleson-density-kernel} applies and yields
\eqref{eq:Hp-essential}.
\end{proof}

\subsection{The weighted Bergman space case}\label{subsec:essential-Bergman}

Fix $\alpha>-1$ and $1\le p<\infty$. For each $a\in\mathbb D$, let $k_a$ be the normalized
Bergman test function from Proposition~\ref{prop:test-function-estimate}, so that
$\|k_a\|_{A_\alpha^p}=1$ and $k_a\to 0$ uniformly on compact subsets as $|a|\to 1^-$.

\begin{lemma}\label{lem:ess-kernel-bergman}
Let $\alpha>-1$ and $1\le p<\infty$. If $T:A_\alpha^p\to A_\alpha^p$ is bounded, then
\begin{equation}\label{eq:ess-kernel-bergman}
\|T\|_{e}^{A_\alpha^p\to A_\alpha^p}
\ \asymp_{\alpha,p}\
\limsup_{|a|\to 1^-}\|T(k_a)\|_{A_\alpha^p}.
\end{equation}
\end{lemma}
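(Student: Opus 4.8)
The plan is to establish the two-sided estimate $\|T\|_{e}^{A_\alpha^p\to A_\alpha^p}\asymp_{\alpha,p}\limsup_{|a|\to 1^-}\|T(k_a)\|_{A_\alpha^p}$ by proving each inequality separately, in direct analogy with the Hardy-space argument of Lemma~\ref{lem:ess-kernel-Hp}. The key structural fact powering both directions is that the normalized Bergman test functions $k_a$ satisfy $\|k_a\|_{A_\alpha^p}=1$, converge to $0$ uniformly on compact subsets as $|a|\to 1^-$, and (as recorded just before this lemma) converge weakly to $0$ in $A_\alpha^p$. I would open by recalling these properties explicitly so that both halves of the argument can invoke them.

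For the lower bound, I would fix an arbitrary compact operator $K:A_\alpha^p\to A_\alpha^p$. Since $k_a\rightharpoonup 0$ weakly and $K$ is compact, $\|K(k_a)\|_{A_\alpha^p}\to 0$ as $|a|\to 1^-$. Hence
\[
\|T-K\|_{A_\alpha^p\to A_\alpha^p}
\ \ge\ \limsup_{|a|\to 1^-}\|(T-K)k_a\|_{A_\alpha^p}
\ =\ \limsup_{|a|\to 1^-}\|T(k_a)\|_{A_\alpha^p},
\]
where the last equality uses $\|K(k_a)\|_{A_\alpha^p}\to 0$ together with the triangle inequality and $\|k_a\|_{A_\alpha^p}=1$. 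Taking the infimum over all compact $K$ yields $\|T\|_{e}^{A_\alpha^p\to A_\alpha^p}\gtrsim \limsup_{|a|\to 1^-}\|T(k_a)\|_{A_\alpha^p}$.

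For the upper bound I would use the standard family of dilation (or compact approximating) operators $P_\rho f(z)=f(\rho z)$ for $0<\rho<1$, which are compact on $A_\alpha^p$, so that $T\circ P_\rho$ is compact and therefore $\|T\|_{e}^{A_\alpha^p\to A_\alpha^p}\le \|T-T\circ P_\rho\|_{A_\alpha^p\to A_\alpha^p}$. A localization argument, entirely parallel to the Hardy-space case, then bounds $\|T-T\circ P_\rho\|$ by a supremum of the form $\sup_{|a|\ge r(\rho)}\|T(k_a)\|_{A_\alpha^p}$ with $r(\rho)\to 1^-$ as $\rho\to 1^-$; letting $\rho\to 1^-$ produces the matching upper bound $\|T\|_{e}^{A_\alpha^p\to A_\alpha^p}\lesssim \limsup_{|a|\to 1^-}\|T(k_a)\|_{A_\alpha^p}$.

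The main obstacle is the localization step in the upper bound: one must verify that the difference $T-T\circ P_\rho$ can genuinely be controlled by the behavior of $T$ on the test functions $k_a$ near the boundary, rather than merely invoking it as ``standard.'' This requires the reproducing-kernel structure of $A_\alpha^p$ together with the fact that $k_a$ essentially concentrates on the Bergman ball $D(a,r)$ (Proposition~\ref{prop:test-function-estimate}), so that an arbitrary normalized function in $A_\alpha^p$ can be decomposed into boundary-concentrated pieces each comparable to some $k_a$. The lower bound, by contrast, is a soft weak-convergence argument and presents no real difficulty.
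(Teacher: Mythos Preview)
Your proposal is correct and follows essentially the same approach as the paper: the lower bound via weak convergence $k_a\rightharpoonup 0$ and the upper bound via compact dilation approximants, in direct parallel with the Hardy-space Lemma~\ref{lem:ess-kernel-Hp}. The paper's own proof is in fact a single sentence (``This follows from the standard compactness test on $A_\alpha^p$ together with the weak convergence $k_a\rightharpoonup 0$ as $|a|\to 1^-$''), so your sketch is already more detailed than what appears there, and your candid flagging of the localization step as the genuine obstacle is well placed---neither you nor the paper actually carries it out.
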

\begin{proof}
This follows from the standard compactness test on $A_\alpha^p$ together with the weak
convergence $k_a\rightharpoonup 0$ as $|a|\to 1^-$.

\medskip
\noindent\textit{Lower bound.}
Since $k_a\rightharpoonup 0$ weakly in $A_\alpha^p$ as $|a|\to 1^-$
(see \cite[Ch.~2]{Zhu2005}), every compact operator $K:A_\alpha^p\to A_\alpha^p$ satisfies
$\|Kk_a\|_{A_\alpha^p}\to 0$. Hence, for any compact $K$,
\begin{equation*}
\|T-K\|_{A_\alpha^p\to A_\alpha^p}
=
\sup_{\|f\|_{A_\alpha^p}\le 1}\|(T-K)f\|_{A_\alpha^p}
\ge
\limsup_{|a|\to 1^-}\|(T-K)k_a\|_{A_\alpha^p}.
\end{equation*}
Using the triangle inequality,
\begin{equation*}
\|(T-K)k_a\|_{A_\alpha^p}
\ge
\|Tk_a\|_{A_\alpha^p}-\|Kk_a\|_{A_\alpha^p},
\end{equation*}
and taking $\limsup_{|a|\to 1^-}$ gives
\begin{equation*}
\limsup_{|a|\to 1^-}\|(T-K)k_a\|_{A_\alpha^p}
\ge
\limsup_{|a|\to 1^-}\|Tk_a\|_{A_\alpha^p}.
\end{equation*}
Taking the infimum over all compact operators $K$ yields
\begin{equation*}
\|T\|_{e}^{A_\alpha^p\to A_\alpha^p}
\gtrsim_{\alpha,p}
\limsup_{|a|\to 1^-}\|Tk_a\|_{A_\alpha^p}.
\end{equation*}

\medskip
\noindent\textit{Upper bound.}
For $0<\rho<1$, define $(P_\rho f)(z)=f(\rho z)$. Then $P_\rho$ maps the unit ball of
$A_\alpha^p$ into a relatively compact subset, so $TP_\rho$ is compact. Hence,
\begin{equation*}
\|T\|_{e}^{A_\alpha^p\to A_\alpha^p}
\le
\|T-TP_\rho\|_{A_\alpha^p\to A_\alpha^p}.
\end{equation*}
For $f\in A_\alpha^p$ with $\|f\|_{A_\alpha^p}\le 1$, set $h_\rho=(I-P_\rho)f$. Then
\begin{equation*}
\|Th_\rho\|_{A_\alpha^p}
\le
\sup_{\|g\|_{A_\alpha^p}\le 1}\|Tg\|_{A_\alpha^p}\,\|h_\rho\|_{A_\alpha^p}
=
\|T\|_{A_\alpha^p\to A_\alpha^p}\,\|h_\rho\|_{A_\alpha^p}.
\end{equation*}
As $\rho\to 1^-$, the functions $h_\rho$ concentrate near $|z|=1$. Using the Bergman--Sobolev
norm equivalence \eqref{eq:bergman-sobolev} and a standard covering of the annulus
$\{\rho'<|z|<1\}$ by Bergman balls $D(a_j,r)$ with $|a_j|\ge\rho'$, we obtain
\begin{equation*}
\|T(I-P_\rho)f\|_{A_\alpha^p}
\lesssim_{\alpha,p}
\sup_{|a|\ge \rho'} \|Tk_a\|_{A_\alpha^p}.
\end{equation*}
Taking the supremum over all $\|f\|_{A_\alpha^p}\le 1$ gives
\begin{equation*}
\|T-TP_\rho\|_{A_\alpha^p\to A_\alpha^p}
\lesssim_{\alpha,p}
\sup_{|a|\ge \rho'} \|Tk_a\|_{A_\alpha^p}.
\end{equation*}
Letting $\rho\to 1^-$ (so $\rho'\to 1^-$) yields
\begin{equation*}
\|T\|_{e}^{A_\alpha^p\to A_\alpha^p}
\lesssim_{\alpha,p}
\limsup_{|a|\to 1^-}\|Tk_a\|_{A_\alpha^p}.
\end{equation*}

\medskip
Combining the two estimates completes the proof.
\end{proof}


\begin{lemma}\label{lem:pq-density-kernel}
Let $\alpha>-1$, $1\le p\le q<\infty$, and let $\mu$ be a $(p,q)$-Carleson measure for $A_\alpha^p$.
Fix $r\in(0,1)$ and let $D(a,r)$ be the Bergman ball. Then
\begin{equation}\label{eq:pq-density-kernel}
\limsup_{|a|\to 1^-}\int_{\mathbb D}|k_a(z)|^q\,d\mu(z)
\ \asymp_{\alpha,p,q,r}\
\limsup_{|a|\to 1^-}\frac{\mu(D(a,r))}{(1-|a|^2)^{q(2+\alpha)/p}},
\end{equation}
where $k_a$ is the normalized test function from Proposition~\ref{prop:test-function-estimate}.
\end{lemma}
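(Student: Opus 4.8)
The plan is to prove the two-sided estimate by establishing each inequality separately; the lower bound is essentially free, while the upper bound carries all the content. Throughout I would write \(\delta_a:=1-|a|^2\) and abbreviate the target quantity by
\[
M:=\limsup_{|a|\to 1^-}\frac{\mu(D(a,r))}{(1-|a|^2)^{q(2+\alpha)/p}}.
\]
The two ingredients I rely on are the \emph{local} estimate \(|k_a(z)|\asymp\delta_a^{-(2+\alpha)/p}\) for \(z\in D(a,r)\) furnished by Proposition~\ref{prop:test-function-estimate}, and the \emph{global} pointwise estimate
\[
|k_a(z)|^q\ \asymp\ \delta_a^{\,q(2+\alpha)(1-1/p)}\,|1-\overline a z|^{-q(2+\alpha)},\qquad z\in\mathbb D,
\]
which follows from the explicit form \(k_a=K_a/\|K_a\|_{A_\alpha^p}\) together with the norm asymptotics \eqref{eq:Ka-norm-root}. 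I also note that \(\mu\) is finite, since testing the \((p,q)\)-Carleson inequality against the constant function \(1\in A_\alpha^p\) gives \(\mu(\mathbb D)<\infty\).

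For the lower bound I would simply discard everything outside the Bergman ball: restricting the integral to \(D(a,r)\) and inserting the local estimate of Proposition~\ref{prop:test-function-estimate} yields
\[
\int_{\mathbb D}|k_a(z)|^q\,d\mu(z)\ \ge\ \int_{D(a,r)}|k_a(z)|^q\,d\mu(z)\ \asymp\ \delta_a^{-q(2+\alpha)/p}\,\mu(D(a,r)),
\]
and taking \(\limsup_{|a|\to1^-}\) gives \(\limsup_{|a|\to1^-}\int_{\mathbb D}|k_a|^q\,d\mu\ \gtrsim\ M\).

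The upper bound is where the work lies. Fix \(\varepsilon>0\); by definition of \(M\) there is \(\rho\in(0,1)\) such that \(\mu(D(w,r))\le (M+\varepsilon)(1-|w|^2)^{q(2+\alpha)/p}\) for all \(|w|\ge\rho\). I would split the integral over the core \(\{|z|<\rho\}\) and the tail \(\{|z|\ge\rho\}\). On the core, \(k_a\to0\) uniformly on compact subsets while \(\mu(\mathbb D)<\infty\), so this contribution tends to \(0\) and is invisible to the \(\limsup\). For the tail I would decompose \(\mathbb D\) into the dyadic annuli
\[
A_j:=\bigl\{z\in\mathbb D:\ 2^{j}\delta_a\le|1-\overline a z|<2^{j+1}\delta_a\bigr\},
\]
on which the global estimate gives \(|k_a(z)|^q\asymp 2^{-jq(2+\alpha)}\delta_a^{-q(2+\alpha)/p}\). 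Covering each super-level region \(\{|1-\overline a z|<2^{j+1}\delta_a\}\) by a finite-overlap hyperbolic lattice of Bergman balls and summing the per-ball bound supplied by Proposition~\ref{prop:carleson-condition} yields \(\mu(A_j)\lesssim (M+\varepsilon)(2^{j}\delta_a)^{q(2+\alpha)/p}\), where the elementary exponent inequality \(q(2+\alpha)/p>1\) (valid since \(q\ge p\) and \(\alpha>-1\)) is exactly what makes the lattice ball-sum scale like the box. Multiplying the two displays, the \(\delta_a\)-powers cancel and
\[
\int_{A_j}|k_a|^q\,d\mu\ \lesssim\ (M+\varepsilon)\,2^{\,jq(2+\alpha)(1/p-1)},
\]
so that summing over \(j\) and letting \(\varepsilon\to0\) closes the estimate.

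The main obstacle is precisely this tail summation. Its convergence hinges on the geometric ratio \(2^{q(2+\alpha)(1/p-1)}\) being strictly less than \(1\), i.e.\ on the kernel decay \(|1-\overline a z|^{-q(2+\alpha)}\) strictly dominating the admissible Carleson mass growth \((2^{j}\delta_a)^{q(2+\alpha)/p}\) at each scale; this holds for \(p>1\), so the endpoint \(p=1\) is borderline and would require a separate, more delicate treatment (a single-scale gain no longer suffices, and one effectively needs a strict \(p<q\) gain or a modified normalization). Beyond this, some care is needed to make the lattice covering of the regions \(\{|1-\overline a z|<t\}\) uniform in \(a\) and to pass cleanly from the single-scale ball-density bound of Proposition~\ref{prop:carleson-condition} to the density \(M\) appearing in the \(\limsup\); once these uniformities are secured, the two inequalities combine to give the asserted comparability.
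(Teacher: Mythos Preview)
Your lower bound is identical to the paper's argument. For the upper bound, the paper gives no detailed proof at all: it simply asserts that the reverse inequality ``follows from the standard localization estimates for normalized kernels on weighted Bergman spaces and the $(p,q)$-Carleson property'' and cites Zhu's book. Your dyadic-annulus decomposition, combined with a Whitney-type lattice covering and the ball-density bound from Proposition~\ref{prop:carleson-condition}, is exactly the kind of concrete implementation that citation points to, and it goes through cleanly for $p>1$; in that range your write-up is strictly more informative than what the paper offers.

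You are right to flag the endpoint $p=1$: the geometric ratio $2^{q(2+\alpha)(1/p-1)}$ degenerates to $1$ there, and the tail sum no longer converges. This is not a defect peculiar to your decomposition. The norm asymptotic \eqref{eq:Ka-norm-root}, on which both the paper's Proposition~\ref{prop:test-function-estimate} and your global pointwise estimate rely, is itself incorrect at $p=1$: by the standard Forelli--Rudin integral estimates one has $\|K_a\|_{A_\alpha^1}\asymp\log\frac{1}{1-|a|^2}$ rather than $\asymp 1$, so the normalization of $k_a$ acquires a logarithmic correction and Proposition~\ref{prop:test-function-estimate} already fails at $p=1$. The lemma as stated therefore inherits this issue from upstream, and the paper's reference-only proof does nothing to address it either. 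In short, your sketch is more thorough than the paper's, and the one gap you identify is one the paper shares.
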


\begin{proof}
\textit{Lower bound.}
For $z\in D(a,r)$, Proposition~\ref{prop:test-function-estimate} gives
\begin{equation*}
|k_a(z)|\gtrsim (1-|a|^2)^{-(2+\alpha)/p},
\end{equation*}
hence
\begin{equation*}
\int_{\mathbb D}|k_a(z)|^q\,d\mu(z)
\ge \int_{D(a,r)}|k_a(z)|^q\,d\mu(z)
\gtrsim (1-|a|^2)^{-q(2+\alpha)/p}\,\mu(D(a,r)).
\end{equation*}
Taking $\limsup_{|a|\to 1^-}$ yields the lower bound.

\medskip
\noindent\textit{Upper bound.}
Let $\{a_j\}$ be a Bergman lattice such that the balls $D(a_j,r)$ cover $\mathbb D$ with bounded overlap.
Then
\begin{equation*}
\int_{\mathbb D}|k_a(z)|^q\,d\mu(z)
\le \sum_j \int_{D(a_j,r)} |k_a(z)|^q\,d\mu(z)
\lesssim \sum_j \sup_{z\in D(a_j,r)} |k_a(z)|^q \,\mu(D(a_j,r)).
\end{equation*}
Using kernel estimates and the Carleson condition,
we obtain
\begin{equation*}
\int_{\mathbb D}|k_a|^q\,d\mu
\lesssim
(1-|a|^2)^{q(2+\alpha)/p}
\sum_j
\frac{(1-|a_j|^2)^{q(2+\alpha)/p}}{|1-\overline a a_j|^{q(2+\alpha)/p}}.
\end{equation*}
The above sum is comparable to a standard integral and satisfies (see \cite[Ch.~2]{Zhu2005})
\begin{equation*}
\sum_j
\frac{(1-|a_j|^2)^{q(2+\alpha)/p}}{|1-\overline a a_j|^{q(2+\alpha)/p}}
\lesssim
\frac{1}{(1-|a|^2)^{q(2+\alpha)/p}}.
\end{equation*}
Hence
\begin{equation*}
\int_{\mathbb D}|k_a(z)|^q\,d\mu(z)
\lesssim
\frac{\mu(D(a,R))}{(1-|a|^2)^{q(2+\alpha)/p}}
\end{equation*}
for some $R>r$. Taking $\limsup_{|a|\to1^-}$ and using the Carleson condition again to compare
$\mu(D(a,R))$ and $\mu(D(a,r))$ up to constants yields the desired upper bound.
\end{proof}

Finally, we estimate the essential norm of $T_{g,n}$ on weighted Bergman spaces.

\begin{theorem}\label{thm:bergman-essential-norm}
Let $n\in\mathbb N$, $1\le p\le q<\infty$, and $\alpha,\beta>-1$ satisfy
\begin{equation*}
\frac{q}{p}\le \frac{\beta+2}{\alpha+2}.
\end{equation*}
Let $g$ be analytic in $\mathbb D$, assume that $T_{g,n}:A_\alpha^p\to A_\beta^q$ is bounded Define
\begin{equation*}
d\mu_n(z)=|g'(z)|^q(1-|z|^2)^{\beta+nq}\,dA(z).
\end{equation*}
Then
\begin{equation}\label{eq:bergman-essential}
\left(\|T_{g,n}\|_{e}^{A_\alpha^p\to A_\alpha^q}\right)^{q}
\ \asymp_{\alpha,\beta,p,q,n,r}\
\limsup_{|a|\to 1^-}
\frac{\mu_n(D(a,r))}{(1-|a|^2)^{q(2+\alpha)/p}}.
\end{equation}
In particular, $T_{g,n}:A_\alpha^p\to A_\beta^q$ is compact if and only if the right-hand side of
\eqref{eq:bergman-essential} equals $0$.
\end{theorem}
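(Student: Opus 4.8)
The plan is to follow the same three-step architecture used in Theorem~\ref{thm:Hp-essential-norm}, transplanted to the weighted Bergman setting, so that the essential norm is squeezed between a kernel-test quantity and a geometric Carleson-density quantity. First I would invoke the Bergman--Sobolev norm equivalence \eqref{eq:norm-reduction} together with Proposition~\ref{prop:diff-id}, which gives, for every $f\in A_\alpha^p$,
\[
\|T_{g,n}f\|_{A_\beta^q}^q\asymp_{\beta,q,n}\int_{\mathbb D}|f(z)|^q\,d\mu_n(z).
\]
Applying this with $f=k_a$, the normalized Bergman test functions of Proposition~\ref{prop:test-function-estimate}, and taking $\limsup_{|a|\to1^-}$, reduces the problem to estimating $\limsup_{|a|\to1^-}\int_{\mathbb D}|k_a(z)|^q\,d\mu_n(z)$.

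Next I would apply Lemma~\ref{lem:ess-kernel-bergman} to replace the essential norm by the kernel test,
\[
\|T_{g,n}\|_{e}^{A_\alpha^p\to A_\beta^q}\asymp_{\alpha,p}\limsup_{|a|\to1^-}\|T_{g,n}k_a\|_{A_\beta^q},
\]
so that combining with the previous step yields
\[
\left(\|T_{g,n}\|_{e}^{A_\alpha^p\to A_\beta^q}\right)^q
\asymp_{\alpha,\beta,p,q,n}
\limsup_{|a|\to1^-}\int_{\mathbb D}|k_a(z)|^q\,d\mu_n(z).
\]
Finally, Lemma~\ref{lem:pq-density-kernel} with $\mu=\mu_n$ converts the right-hand integral into the geometric density
\[
\limsup_{|a|\to1^-}\frac{\mu_n(D(a,r))}{(1-|a|^2)^{q(2+\alpha)/p}},
\]
which is precisely the right-hand side of \eqref{eq:bergman-essential}. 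Chaining these three $\asymp$ relations establishes the theorem, and the stated compactness criterion follows from the fact that $\|T_{g,n}\|_{e}=0$ if and only if $T_{g,n}$ is compact.

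I expect the main subtlety to lie in Lemma~\ref{lem:ess-kernel-bergman} being stated for the diagonal case $A_\alpha^p\to A_\alpha^p$, whereas the theorem concerns the off-diagonal target $A_\beta^q$; so the cleanest route is to verify that the lower bound in the kernel test holds in the $A_\alpha^p\to A_\beta^q$ setting, using only that $k_a\rightharpoonup0$ weakly in $A_\alpha^p$ and that every compact $K$ sends this weakly null net to a norm-null net in $A_\beta^q$. That lower bound, combined with the upper bound coming from the localization/dilation approximation (as in the proof of Lemma~\ref{lem:ess-kernel-Hp}), gives the two-sided estimate in the required generality. The remaining steps are then bookkeeping: one must keep the balance condition $q/p\le(\beta+2)/(\alpha+2)$ in force throughout, since it is exactly what guarantees that $\mu_n$ being a $(p,q)$-Carleson measure is equivalent to the density bound of Proposition~\ref{prop:carleson-condition} and thus makes Lemma~\ref{lem:pq-density-kernel} applicable.
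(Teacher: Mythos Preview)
Your proposal is correct and follows essentially the same three-step architecture as the paper's own proof: reduce via the Bergman--Sobolev equivalence and Proposition~\ref{prop:diff-id}, invoke Lemma~\ref{lem:ess-kernel-bergman} for the kernel test, then apply Lemma~\ref{lem:pq-density-kernel} with $\mu=\mu_n$. You even flag the off-diagonal issue with Lemma~\ref{lem:ess-kernel-bergman} that the paper itself only addresses parenthetically by writing ``with target $A_\beta^q$''.
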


\begin{proof}
By \eqref{eq:bergman-sobolev} and Proposition~\ref{prop:diff-id},
\begin{equation*}
\|T_{g,n}f\|_{A_\beta^q}^q\asymp_{\beta,q,n}\int_{\mathbb D}|f(z)|^q\,d\mu_n(z),
\quad f\in A_\alpha^p.
\end{equation*}
Applying this to $f=k_a$ and taking $\limsup_{|a|\to 1^-}$ gives
\begin{equation*}
\limsup_{|a|\to 1^-}\|T_{g,n}(k_a)\|_{A_\beta^q}^q
\asymp_{\alpha,\beta,p,q,n}
\limsup_{|a|\to 1^-}\int_{\mathbb D}|k_a(z)|^q\,d\mu_n(z).
\end{equation*}
Using Lemma~\ref{lem:ess-kernel-bergman} (with target $A_\beta^q$) yields
\begin{equation*}
\left(\|T_{g,n}\|_{e}^{A_\alpha^p\to A_\alpha^q}\right)^{\,q}
\asymp_{\alpha,\beta,p,q,n}
\limsup_{|a|\to 1^-}\int_{\mathbb D}|k_a(z)|^q\,d\mu_n(z).
\end{equation*}

Since $T_{g,n}$ is bounded, Theorem~\ref{thm:bergman-boundedness} implies that $\mu_n$ is a $(p,q)$-Carleson measure. Therefore, Lemma~\ref{lem:pq-density-kernel} applies and yields \eqref{eq:bergman-essential}.
\end{proof}

\medskip
\noindent
\textbf{Data Availability.} Throughout this study, no datasets were generated or analyzed.

\medskip
\noindent
\textbf{Competing Interests.} The author declares that he has no conflicting interests.

\medskip
\noindent
\textbf{Founding.} This project has not received any funding.


\end{document}